\newcounter{mycount}
\theoremstyle{plain}
\newtheorem{theorem}[mycount]{Theorem}
\newtheorem{corollary}[mycount]{Corollary}
\newtheorem{lemma}[mycount]{Lemma}
\newtheorem{conjecture}[mycount]{Conjecture}
\theoremstyle{definition}
\newtheorem{definition}{Definition}
\theoremstyle{example}
\theoremstyle{openproblem}
\def\T{\CMcal{T}}
\def\M{\CMcal{M}}
\def\R{\CMcal{R}}
\def\A{\CMcal{A}}
\theoremstyle{remark}
\newtheorem{remark}{Remark}
\numberwithin{equation}{section}
\numberwithin{figure}{section}
\def\des{\mathsf{des}}
\def\ides{\mathsf{ides}}
\def\iasc{\mathsf{iasc}}
\newcommand{\asc}{\mathsf{asc}}
\def\max{\mathsf{max}}
\def\zero{\mathsf{zero}}
\def\rep{\mathsf{rep}}
\def\rmin{\mathsf{rmin}}
\def\lmax{\mathsf{lmax}}
\def\lmin{\mathsf{lmin}}
\def\rmax{\mathsf{rmax}}
\def\rloc{\mathsf{rloc}}
\def\dist{\mathsf{dist}}
\def\fmax{\mathsf{fmax}}
\def\cmax{\mathsf{cmax}}
\def\ealm{\mathsf{ealm}}
\def\ealz{\mathsf{ealz}}
\def\mrep{\mathsf{mrep}}
\def\A{\mathcal{A}}
\def\T{\CMcal{T}}
\def\I{\CMcal{I}}
\def\CS{\CMcal{S}}
\def\C{ \CMcal{C}}
\def\P{ \CMcal{P}}
\def\min{\mathrm{min}}
\def\des{\mathsf{des}}
\def\ides{\mathsf{ides}}
\def\iasc{\mathsf{iasc}}
\def\asc{\mathsf{asc}}
\def\max{\mathsf{max}}
\def\zero{\mathsf{zero}}
\def\rep{\mathsf{rep}}
\def\rmin{\mathsf{rmin}}
\def\lmax{\mathsf{lmax}}
\def\lmin{\mathsf{lmin}}
\def\rmax{\mathsf{rmax}}
\def\cmax{\mathsf{cmax}}
\def\czero{\mathsf{czero}}
\def\Rmin{\mathsf{Rmin}}
\def\ealm{\mathsf{ealm}}
\def\rpos{\mathsf{rpos}}
\def\S{\mathfrak{S}}
\title[Symmetric generating functions and Euler-Stirling statistics]{Symmetric generating functions and Euler-Stirling statistics on permutations}
\author{Emma Yu Jin}
\thanks{The author was supported by the Austrian Research Fund FWF Elise-Richter Project V 898-N, is supported by the Fundamental Research Funds for the Central Universities, Project No. ZK1124 and the National Nature Science Foundation of China (NSFC), Project No. 12201529.}
\address{School of Mathematical Sciences, Xiamen University, Xiamen 361005, P.R. China}
\email{yjin@xmu.edu.cn}
\date{\today}
\begin{document}
\begin{abstract}
We present (bi-)symmetric generating functions for the joint distributions of  Euler-Stirling statistics on permutations, including the number of descents ($\des$), inverse descents ($\ides$), the number of left-to-right maxima ($\lmax$), the number of right-to-left maxima ($\rmax$) and the number of left-to-right minima ($\lmin$). We also show how they recover the classical symmetric generating function of permutations due to Carlitz, Roselle and Scoville (1966).

Our proofs exploit three different recursive constructions of inversion sequences, bijections on the multiple equidistributions of Euler-Stirling statistics over permutations and transformation formulas of basic hypergeometric series. Furthermore, we establish a new quadruple equidistribution of Euler-Stirling statistics over inversion sequences, as progress towards a conjecture proposed by Schlosser and the author (2020).
\end{abstract}

	\maketitle
\section{Introduction and Main results}
Through generating functions, the study of permutations statistics has remarkably diverse connections to other mathematical areas such as symmetric functions \cite{gr:93,Stanley:altper,Stanley:book}, basic hypergeometric series \cite{aj,BLR:14,fjlyz,js} and probabilistic distributions \cite{Flajolet:book}.

The Eulerian and Stirling statistics are two kinds of classical statistics. 
Let $\S_n$ be the set of permutations of $[n]:=\{1,2,\ldots,n\}$. Given a permutation $\pi=\pi_1\pi_2\cdots\pi_n\in\S_n$, define 
\begin{align*}
\des(\pi):=|\{i\in [n-1]:\pi_i>\pi_{i+1}\}|
\end{align*}
to be the number of descents of $\pi$. The descent polynomial over permutations 
\begin{align*}
A_n(t):=\sum_{\pi\in\S_n}t^{\des(\pi)}
\end{align*}
is called the Eulerian polynomial and the coefficients $[t^k]A_n(t)$ are the {\em Eulerian numbers}. Let
\begin{align}\label{E:lmax}
\lmax(\pi):=|\{\pi_i:\pi_i>\pi_{j} \mbox{ for all }j<i\}|
\end{align}
be the number of left-to-right maxima, then 
\begin{align*}
(x)_n:=x(x+1)\cdots(x+n-1)=\sum_{\pi\in\S_n}x^{\lmax(\pi)}
\end{align*}
and the coefficients $[x^k](x)_n$ are the {\em signless Stirling numbers of the first kind}. Any statistic whose distribution over permutations equals the Eulerian numbers (resp. signless Stirling numbers of the first kind) is called an {\em Eulerian statistic} (resp. {\em Stirling statistic}). For instance, the number of excedances ($\mathsf{exc}$) and the number of inverse descent ($\ides$), defined by $\ides(\pi):=\des(\pi^{-1})$,
are Eulerian statistics. The number of cycles, the left-to-right minima ($\lmin$), and the number of right-to-left maxima ($\rmax$) are Stirling statistics, where the latter two are defined similarly to $\lmax$ (see (\ref{E:lmax})).

Recently we have explored the symmetric joint distributions of Euler-Stirling statistics over a subclass of permutations \cite{fjlyz,hj,hjs,js}. Our motivation stemmed from its bijective relations to other combinatorial structures such as $({\bf2+2})$-free posets, interval orders, certain restricted inversion sequences, Stoimenow's involution and regular linearized chord diagrams, etc (see for example \cite{bcdk,cl,dkrs,dp,fo,je,je2,kr2,kr,lev}); its connection to basic hypergeometric series \cite{aj,js}, asymptotics \cite{BLR:14,hj,hjs} and modular forms \cite{BLR:14,zag}. 

This paper is a continuation of this research line in the context of permutations. In 1966, an elegant symmetric generating function for the joint distribution of $(\des,\ides)$ was found by Carlitz, Roselle and Scoville \cite{crs}. That is,
\begin{align}\label{E:gg1}
\sum_{n=0}^{\infty}\sum_{s\in\S_n}\frac{u^{\des(\pi)}x^{\ides(\pi)}t^n}{(1-u)^{n+1}(1-x)^{n+1}}=\sum_{n=1}^{\infty}\sum_{k=1}^{\infty}\frac{x^{n-1}u^{k-1}}{(1-t)^{kn}}.
\end{align}
This identity was subsequently extended to include two Mahonian statistics (the major index and its inverse) by Garsia and Gessel \cite{gg}, employing the theory of $P$-partitions and its nice amenability to record permutation statistics. 

Returning to (\ref{E:gg1}), if we add Stirling statistics such as $\lmax,\lmin$ and $\rmax$ to the LHS of (\ref{E:gg1}), what would the generating functions look like and whether the symmetric distribution is preserved? The purpose of this paper is to answer these questions.

Our main results are (bi)-symmetric generating function formulas for the joint distributions of four Euler-Stirling statistics over permutations. 

Let $\CMcal{G}(t;x,u,v,q,z)$ be the generating function of permutations
counted by the number of descents (variable $u$), the number of inverse descents (variable $x$), the number of right-to-left maxima (variable $v$), the number of left-to-right minima (variable $q$), the number of left-to-right maxima (variable $z$) and the length (variable $t((1-x)(1-u))^{-1}$), namely, 
\begin{align}\label{E:G5}
\CMcal{G}(t;x,u,v,q,z)
:&=\sum_{n=1}^{\infty}t^n\sum_{\pi\in\S_n}
\frac{u^{\des(\pi)}x^{\ides(\pi)}q^{\lmin(\pi)}v^{\rmax(\pi)}z^{\lmax(\pi)}}
{(1-u)^n(1-x)^n}.
\end{align}
We derive a generating function formula for $\CMcal{G}(t;x,u,v,q,1)$ by recursively decomposing inversion sequences according to the initial consecutive zeros, while to deduce a formula for $\CMcal{G}(t;x,u,v,q,z)$ by our approach seems to be currently out of reach. The definitions of inversion sequences and related statistics are postponed to Section \ref{S:inv}. 
\begin{theorem}\label{T:1}
	The generating function $\CMcal{G}(t;x,u,v,q,1)$ for the joint distribution of a quadruple $(\des,\ides,\rmax,\lmin)$ of Euler-Stirling statistics on permutations equals
\begin{align}
\CMcal{G}(t;x,u,v,q,1)
\nonumber&=\frac{vt}{1-x}
\sum_{n=1}^{\infty}\frac{qx-1+(1-q)r^{n-1}}{x-r^n}
\prod_{i=1}^{n-1}
\frac{u(x-r^i-xvt)((1-qt)r^{i-1}-1)}{(r^{i}-1)(x-r^i)}\\
\nonumber&\qquad\times\bigg(1-\frac{ut(q-1)}{x-1}\sum_{n=1}^{\infty}
\frac{(x-1-xv)r^{n-1}+xv}{r^n-1}\\
\label{E:thm1}&\qquad\qquad\qquad\times \prod_{i=1}^{n-1}
\frac{u(x-r^i-xvt)((1-qt)r^{i-1}-1)}{(r^{i}-1)(x-r^i)}\bigg)^{-1},
\end{align}
where $r=1-t$. Furthermore, $\CMcal{G}(t;x,u,v,q,1)=\CMcal{G}(t;x,u,q,v,1)=\CMcal{G}(t;u,x,q,v,1)$, namely, $(\des,\ides,\rmax,\lmin)$, $(\des,\ides,\lmin,\rmax)$, $(\ides,\des,\rmax,\lmin)$ are all equidistributed over the set $\S_n$.
\end{theorem}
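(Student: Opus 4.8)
The plan is to prove the closed form \eqref{E:thm1} by transporting the defining sum \eqref{E:G5} (specialized to $z=1$) to inversion sequences and then solving a functional equation produced by the initial-zeros decomposition; the three-fold equidistribution in the ``furthermore'' clause is not visible in the formula and will be treated separately at the end. First I would apply the bijection $\S_n\to\I_n$ together with the statistic correspondence recorded in Section \ref{S:inv}, which rewrites $\CMcal{G}(t;x,u,v,q,1)$ as a weighted sum over all inversion sequences in which $u,x,v,q$ mark the images of $\des,\ides,\rmax,\lmin$ and the normalized length $t/((1-u)(1-x))$ is marked by $t$; the geometric summations implicit in this normalization are what will generate the $(1-u)$- and $(1-x)$-type denominators. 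To expose the recursion I would refine this series by a catalytic variable $w$ recording a boundary datum, for instance the first nonzero entry, equivalently the length of the leading block of zeros, and work with the refined series $\CMcal{G}(w)$.

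\textbf{The recursion.} Next I would set up the recursion. Writing each nonempty $e\in\I_n$ according to its maximal leading block of zeros $e_1=\cdots=e_k=0$ with $e_{k+1}\ge1$, and deleting that block, produces a shorter sequence whose entry bounds are shifted upward by $k$; this upward shift is exactly the source of the powers $r^i=(1-t)^i$. Reading off from Section \ref{S:inv} how each of $\des,\ides,\rmax,\lmin$ responds to the deletion, and summing over $k$ and over the admissible first nonzero value, yields a functional equation for the catalytically refined series $\CMcal{G}(w)$ with an explicit transfer kernel organized around $r=1-t$. Iterating this equation level by level generates, for each top level $n$, the accumulated transfer $\prod_{i=1}^{n-1}\frac{u(x-r^i-xvt)((1-qt)r^{i-1}-1)}{(r^i-1)(x-r^i)}$, which is precisely the common product appearing in both series of \eqref{E:thm1}, together with a level-$n$ emission weight fixed by how the recursion terminates.

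\textbf{Solving by the kernel method.} The catalytic variable $w$ is then eliminated by the kernel method, which is where I expect the real work to lie. The root $w_*$ of the kernel closes the recursion, and the standard cancellation expresses the sought series $\CMcal{G}(t;x,u,v,q,1)$ as a \emph{quotient} of two such iterated-transfer series, which is what accounts for the ratio structure of \eqref{E:thm1}. The numerator should resum to $\frac{vt}{1-x}S_1$ with terminal weight $\frac{qx-1+(1-q)r^{n-1}}{x-r^n}$, while the coefficient of the unknown boundary term should resum to $1-\frac{ut(q-1)}{x-1}S_2$ with terminal weight $\frac{(x-1-xv)r^{n-1}+xv}{r^n-1}$; forming the quotient gives \eqref{E:thm1}. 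The principal obstacle is concentrated here: choosing the catalytic variable so that the kernel admits a usable root, carrying all four statistics simultaneously and correctly through the deletion, and verifying that the two distinct terminal weights emerge as stated, one from the principal series and one from the boundary series used in the cancellation.

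\textbf{The equidistributions.} Finally, since \eqref{E:thm1} is manifestly \emph{not} symmetric under $v\leftrightarrow q$ nor under $(x,v)\leftrightarrow(u,q)$, the three equalities $\CMcal{G}(t;x,u,v,q,1)=\CMcal{G}(t;x,u,q,v,1)=\CMcal{G}(t;u,x,q,v,1)$ cannot be read off the formula and must be argued independently. I would establish them within the inversion-sequence model of Section \ref{S:inv}: it suffices to exhibit bijections on $\I_n$ (equivalently on $\S_n$) that fix the pair $(\des,\ides)$ while interchanging $\rmax\leftrightarrow\lmin$, realizing the first equality, and that interchange $\des\leftrightarrow\ides$ together with $\rmax\leftrightarrow\lmin$, realizing the composite second equality. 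Note that inversion $\pi\mapsto\pi^{-1}$ swaps $\des\leftrightarrow\ides$ but does not preserve $\rmax$ or $\lmin$, so the required maps are genuinely finer; I would build them from the same recursive structure of initial-zeros blocks already used for the main formula.
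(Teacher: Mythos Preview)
Your outline for the closed form is in the right spirit—transfer to inversion sequences, introduce a catalytic variable attached to the initial zero block, derive a functional equation, and apply the kernel method followed by iteration—but it is missing the one device that actually makes the computation close. The paper does \emph{not} work with all inversion sequences directly. It restricts to the subset $\I^*$ of sequences whose zeros are all initial \emph{and} whose smallest nonzero entry equals $1$, keeps \emph{two} auxiliary variables ($z$ for $\czero$ and $w$ for $\czero-\ealz$, where $\ealz$ is the entry right after the last zero), and only at the very end extracts the coefficient of $z^1$, since a sequence with exactly one zero is an arbitrary inversion sequence and this coefficient is, up to a monomial, $\CMcal{G}$. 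Your single catalytic variable ``first nonzero entry / length of the leading zero block'' is not the one that makes the kernel cancel; the paper's kernel root is $w=1-\kappa(u-1)$ with $\kappa=tx-t+z^{-1}$, and the iteration variable is $z$, not $w$. Without the $\I^*$ restriction and the $(z,w)$ pair you will not get a functional equation that produces the specific terminal weights $\frac{qx-1+(1-q)r^{n-1}}{x-r^n}$ and $\frac{(x-1-xv)r^{n-1}+xv}{r^n-1}$ you are aiming for.

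There is also a concrete error in your treatment of the equidistributions. You write that ``inversion $\pi\mapsto\pi^{-1}$ swaps $\des\leftrightarrow\ides$ but does not preserve $\rmax$ or $\lmin$, so the required maps are genuinely finer.'' This is false: one checks directly that $\lmin(\pi)=\lmin(\pi^{-1})$ and $\rmax(\pi)=\rmax(\pi^{-1})$ (if $\pi_i$ is a left-to-right minimum then position $\pi_i$ in $\pi^{-1}$ is again a left-to-right minimum, and similarly for right-to-left maxima). Likewise the reverse-complement $(\pi^c)^r$ preserves both $\des$ and $\ides$ while swapping $\lmin\leftrightarrow\rmax$. Hence
\[
(\des,\ides,\lmin,\rmax)\pi=(\des,\ides,\rmax,\lmin)(\pi^c)^r,\qquad
(\des,\ides,\lmin,\rmax)\pi=(\ides,\des,\lmin,\rmax)\pi^{-1},
\]
and the three claimed equalities $\CMcal{G}(t;x,u,v,q,1)=\CMcal{G}(t;x,u,q,v,1)=\CMcal{G}(t;u,x,q,v,1)$ follow immediately—no recursive construction on inversion sequences is needed for this part.
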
 
Our second main result is two nice equivalent forms for $\CMcal{G}(t;x,u,v,1,1)$: one is a direct consequence of Theorem \ref{T:1}; and the other one is an intermediate result along our way to find an expression of $\CMcal{G}(t;x,u,v,1,z)$.
\begin{theorem}\label{T:adr}
	The generating function $\CMcal{G}(t;x,u,v,1,1)$ of permutations with respect to a triple $(\des,\ides,\rmax)$, or equivalently $(\des,\ides,\lmin)$,  $(\ides,\des,\lmin)$ or $(\ides,\des,\rmax)$ of Euler-Stirling statistics equals
	\begin{align}
	\label{E:adr1}\CMcal{G}(t;x,u,v,1,1)&=
	\sum_{n=1}^{\infty}\frac{vtu^{n-1}}{(1-t)^n-x}\prod_{i=1}^{n-1}
	\frac{x-(1-t)^i-xvt}{x-(1-t)^i}\\
	\label{E:adr2}&=\sum_{n=1}^{\infty}\frac{vt(1-t)^{n-1}}{u^{n}(x(1-t)^{n-1}-1)}
	\prod_{i=1}^{n}\frac{1-x(1-t)^{i-1}}
	{x(1-t)^{i-1}(vt-1)+1}.
	\end{align}
	Furthermore, $\CMcal{G}(t;x,u,v,1,1)=\CMcal{G}(t;u,x,1,v,1)=\CMcal{G}(t;x,u,1,v,1)=\CMcal{G}(t;u,x,v,1,1)$.
\end{theorem}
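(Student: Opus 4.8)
The plan is to prove Theorem~\ref{T:adr} in two logically separate parts: first the two closed-form expressions \eqref{E:adr1} and \eqref{E:adr2}, and then the symmetry statement relating the four triples. For the identity \eqref{E:adr1}, I would simply specialize Theorem~\ref{T:1} by setting $q=1$ and $z=1$. Under $q=1$ the two factors $qx-1+(1-q)r^{n-1}$ and $(1-q)$ collapse dramatically: the numerator $qx-1+(1-q)r^{n-1}$ becomes $x-1$, while the entire correction factor in the big parenthesis (the inverse of a series multiplied by $ut(q-1)/(x-1)$) degenerates to $1$ because of the $(q-1)$ prefactor. Likewise the factor $(1-qt)r^{i-1}-1$ in the product becomes $(1-t)r^{i-1}-1 = r^i-1$, which cancels against the $(r^i-1)$ in the denominator of the product. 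After these cancellations the quadruple formula \eqref{E:thm1} should reduce cleanly to the single sum \eqref{E:adr1}. This step is essentially bookkeeping, so I expect it to be routine.

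The identity \eqref{E:adr2}, by contrast, is advertised as arising independently ``along our way to find an expression of $\CMcal{G}(t;x,u,v,1,z)$,'' which suggests it comes from a genuinely different recursive decomposition of inversion sequences rather than from Theorem~\ref{T:1}. My approach here would be to verify that \eqref{E:adr1} and \eqref{E:adr2} are equal as formal power series. I would try to transform one sum into the other by a change of summation index or by recognizing both as terminating/nonterminating basic hypergeometric series and invoking one of the transformation formulas advertised in the abstract. A promising route is to write each summand of \eqref{E:adr1} in product form, factor out a common ratio, and match it term-by-term against \eqref{E:adr2} after substituting $i \mapsto n-i$ or reindexing the product; the presence of $u^{n-1}$ versus $u^{-n}$ and of $x-(1-t)^i$ versus $1-x(1-t)^{i-1}$ strongly hints at such a reciprocal reindexing of the telescoping product.

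For the symmetry statement $\CMcal{G}(t;x,u,v,1,1)=\CMcal{G}(t;u,x,1,v,1)=\CMcal{G}(t;x,u,1,v,1)=\CMcal{G}(t;u,x,v,1,1)$, I would deduce it combinatorially rather than by manipulating the closed forms. The equality $\CMcal{G}(t;x,u,v,1,1)=\CMcal{G}(t;u,x,v,1,1)$ (swapping $x\leftrightarrow u$, i.e. $\des\leftrightarrow\ides$) already follows from the last assertion of Theorem~\ref{T:1} specialized at $q=1$, since that theorem gives $(\des,\ides,\rmax,\lmin)\sim(\ides,\des,\rmax,\lmin)$. The equality $\CMcal{G}(t;x,u,v,1,1)=\CMcal{G}(t;x,u,1,v,1)$ (swapping $\rmax\leftrightarrow\lmin$) likewise follows from $(\des,\ides,\rmax,\lmin)\sim(\des,\ides,\lmin,\rmax)$ in Theorem~\ref{T:1} at $q=1$. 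The remaining relation then follows by composing these two involutions. Thus the four-fold symmetry reduces to the three-fold symmetry already established in Theorem~\ref{T:1} together with a composition argument.

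**The hard part** will be establishing the equivalence of \eqref{E:adr1} and \eqref{E:adr2}: the reindexing that turns the ascending telescoping product $\prod_{i=1}^{n-1}$ into the descending one $\prod_{i=1}^{n}$ while correctly converting all the factors must be carried out with care, and one must check that the prefactors $vtu^{n-1}/((1-t)^n-x)$ and $vt(1-t)^{n-1}/(u^n(x(1-t)^{n-1}-1))$ are matched by exactly the shift in the product bounds. If a direct reindexing does not close, I would fall back on identifying both sides as instances of a contiguous-relation or a ${}_2\phi_1$-type transformation of basic hypergeometric series and quoting the relevant transformation formula, as foreshadowed in the abstract. The symmetry part and the specialization to \eqref{E:adr1} I expect to be comparatively straightforward.
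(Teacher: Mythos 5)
Your first and third steps are sound and match the paper: setting $q=1$ in \eqref{E:thm1} does make the correction factor collapse to $1$ and $(1-qt)r^{i-1}-1$ cancel against $r^i-1$, giving \eqref{E:adr1} (the paper dismisses this with one sentence, ``\eqref{E:adr1} is a special case of Theorem \ref{T:1}''), and the four-fold symmetry does follow from the symmetries already recorded in Theorem \ref{T:1} specialized at $q=1$ together with composition.

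The genuine gap is your treatment of \eqref{E:adr2}. A term-by-term reindexing of \eqref{E:adr1} into \eqref{E:adr2} cannot work for a structural reason: the $n$-th summand of \eqref{E:adr1} carries $u^{n-1}$ while the $n$-th summand of \eqref{E:adr2} carries $u^{-n}$, so one expression is a power series in $u$ and the other a power series in $u^{-1}$; they only agree after both are re-expanded as formal power series in $t$, and no substitution $i\mapsto n-i$ or shift of product bounds can match individual summands. Your fallback (a basic hypergeometric transformation) is closer in spirit to what can be done --- indeed the paper writes \eqref{E:adr1} as a ${}_2\phi_1$-type series in argument $u$ and \eqref{E:adr2} as one in argument $u^{-1}$ in Section \ref{S:4} --- but it uses those identifications only to prove the $x\leftrightarrow u$ symmetry of each form separately via two \emph{different} specializations of the nonterminating ${}_4\phi_3$ transformation \eqref{tf43}, not to convert one form into the other; a connection formula between argument $u$ and argument $1/u$ is not a routine single-series transformation, and you have not identified one. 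What the paper actually does is prove \eqref{E:adr2} independently: it introduces the dual family $\CMcal{I}^{\bullet}$ of inversion sequences whose maximal entries all sit at the beginning, derives a functional equation for its generating function from Lemmas \ref{L:M1} and \ref{L:M2}, and solves it by the kernel method (choosing $w=1-(1-u)\rho$ and iterating), extracting \eqref{E:adr2} as the coefficient $[q^0]$. This second decomposition is not optional bookkeeping --- it is precisely the machinery reused to prove Theorem \ref{T:4}, which is why \eqref{E:adr2} is described as an intermediate result on the way to $\CMcal{G}(t;x,u,v,1,z)$. As written, your proposal has no viable path to \eqref{E:adr2}.
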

\begin{remark}\label{R:1}
An analytic proof of the fact $\CMcal{G}(t;x,u,v,1,1)=\CMcal{G}(t;u,x,v,1,1)$ follows from a transformation formula of non-terminating basic hypergeometric $_4\phi_3$ series in \cite{js}. Details are provided at the end of Section \ref{S:4}.
\end{remark}
Setting $v=1$ in both (\ref{E:adr1}) and (\ref{E:adr2}), we retrieve (\ref{E:gg1}).
\begin{corollary}\label{C:1}
	The generating function for the pair $(\des,\ides)$ of Eulerian statistics on permutations is given by
	\begin{align*}
	 \sum_{n=0}^{\infty}\sum_{\pi\in\S_n}\frac{u^{\des(s)}x^{\ides(\pi)}t^n}{(1-u)^{n+1}(1-x)^{n+1}}
	&=\sum_{n=1}^{\infty}\frac{t(1-t)^{n-1}(u-1)^{-1}}
	{u^n(1-x(1-t)^{n-1})(1-x(1-t)^{n})}+\frac{1}{(u-1)(x-1)}\\
	&=\sum_{n=1}^{\infty}\sum_{k=1}^{\infty}\frac{x^{n-1}u^{k-1}}{(1-t)^{kn}}
	\end{align*}
\end{corollary}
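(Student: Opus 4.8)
The plan is to specialize Theorem~\ref{T:adr} at $v=1$ and match the two resulting closed forms against, respectively, the middle and the right-hand sides of the corollary. First I would record the link between the left-hand side and the series $\CMcal{G}$ of (\ref{E:G5}). Isolating the $n=0$ term (the empty permutation contributes $1$) and factoring out one extra copy of $((1-u)(1-x))^{-1}$ gives
\[
\sum_{n=0}^{\infty}\sum_{\pi\in\S_n}\frac{u^{\des(\pi)}x^{\ides(\pi)}t^n}{(1-u)^{n+1}(1-x)^{n+1}}=\frac{1+\CMcal{G}(t;x,u,1,1,1)}{(1-u)(1-x)},
\]
so it suffices to insert the two expressions for $\CMcal{G}(t;x,u,v,1,1)$ from Theorem~\ref{T:adr} with $v=1$ and simplify, writing throughout $r=1-t$.

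For the middle form I would use (\ref{E:adr2}). The specialization $x(1-t)^{i-1}(vt-1)+1\big|_{v=1}=1-xr^{i}$ converts its product into the telescoping product $\prod_{i=1}^{n}\frac{1-xr^{i-1}}{1-xr^{i}}=\frac{1-x}{1-xr^{n}}$, whence
\[
\CMcal{G}(t;x,u,1,1,1)=\sum_{n=1}^{\infty}\frac{t\,r^{n-1}(x-1)}{u^{n}(1-xr^{n-1})(1-xr^{n})}.
\]
Substituting this into the displayed relation above and performing a short sign bookkeeping, using $\frac{x-1}{(1-u)(1-x)}=\frac{1}{u-1}$ and $\frac{1}{(1-u)(1-x)}=\frac{1}{(u-1)(x-1)}$, produces exactly the middle expression of the corollary.

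For the right-hand double sum I would instead use (\ref{E:adr1}). The specialization $x-(1-t)^{i}-xt=r\,(x-r^{i-1})$ telescopes its product to $r^{n-1}\frac{x-1}{x-r^{n-1}}$, and the splitting $\frac{t\,r^{n-1}}{(x-r^{n})(x-r^{n-1})}=\frac{1}{x-r^{n-1}}-\frac{1}{x-r^{n}}$ then gives $\CMcal{G}(t;x,u,1,1,1)=(1-x)\sum_{n\ge1}u^{n-1}\bigl(\tfrac{1}{x-r^{n-1}}-\tfrac{1}{x-r^{n}}\bigr)$. Writing $a_{j}=(x-r^{j})^{-1}$ and summing by parts against the weight $u^{n-1}$ collapses this sum; crucially, the surviving boundary term $a_{0}=(x-1)^{-1}$ cancels against the contribution $-a_{0}$ coming from the constant term $\frac{1}{(1-u)(1-x)}$, leaving the single sum $\sum_{m\ge1}u^{m-1}/(r^{m}-x)$. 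Expanding $\tfrac{1}{r^{m}-x}=\sum_{j\ge0}x^{j}r^{-m(j+1)}$ and reindexing $j\mapsto n-1$ yields $\sum_{n\ge1}\sum_{m\ge1}x^{n-1}u^{m-1}(1-t)^{-mn}$, which is the right-hand side and hence recovers (\ref{E:gg1}).

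The one delicate point is the summation by parts in the last paragraph: one must keep careful track of the boundary term and verify that it exactly cancels the constant arising from the empty permutation. All the rearrangements should be read as identities of formal power series in $t$, each coefficient of $t^{N}$ being a finite sum, which legitimizes both the telescoping and the interchange of summations used to pass to the double sum.
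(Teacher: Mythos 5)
Your proposal is correct and follows essentially the same route as the paper: specialize Theorem \ref{T:adr} at $v=1$, telescope the products, and pass to the double sum via partial fractions and geometric expansion with an interchange of summations. The only cosmetic difference is that you obtain the double-sum form from (\ref{E:adr1}) by an Abel summation, whereas the paper derives it from the middle form coming out of (\ref{E:adr2}) and merely remarks that the (\ref{E:adr1}) route works ``in a similar way''; your boundary-term cancellation against $\frac{1}{(u-1)(x-1)}$ is the correct bookkeeping for that variant.
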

The second equation is proved in Section \ref{sec:3}. Using a dual version of the first decomposition, i.e., a recursive construction of inversion sequences by the initial strictly increasing maximal entries, we are able to establish a formula for $\CMcal{G}(t;x,u,v,1,z)$.

\begin{theorem}\label{T:4}
	The generating function $\CMcal{G}(t;x,u,v,1,z)$ of permutations counted by the quadruple $(\des,\ides,\rmax,\lmax)$ of Euler-Stirling statistics equals
	\begin{align*}
	\CMcal{G}(t;x,u,v,1,z)
	&=\sum_{n=1}^{\infty}\frac{ztvr^{n-1}(1+ux\,\T_n)}{u(xr^{n-1}-1)}
	\prod_{i=1}^{n-1}\frac{t(1-z)r^{i-1}+r^i-1}{u(r^i-1)}\\
	&\qquad\quad\times\left(1-\sum_{n=1}^{\infty}\frac{t(z-1)r^{n-1}}{u(r^n-1)}\prod_{i=1}^{n-1}\frac{t(1-z)r^{i-1}+r^i-1}{u(r^i-1)}\right)^{-1},
	\end{align*}
	where $r=1-t$ and $\T_n=r^{n-1}\CMcal{G}(t;xr^{n-1},u,v,1,1)$ given in Theorem \ref{T:adr}.
\end{theorem}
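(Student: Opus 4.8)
The plan is to transport everything to inversion sequences and to run the mirror image of the argument behind Theorem \ref{T:1}. By the bijection of Section \ref{S:inv}, the weighted sum \eqref{E:G5} defining $\CMcal{G}(t;x,u,v,1,z)$ becomes a generating function over inversion sequences $e=(e_1,\dots,e_n)$ with $0\le e_i\le i-1$, in which $z$ marks the Stirling statistic $\lmax$; under the bijection this statistic is supported on the \emph{maximal entries} $e_i=i-1$, which is exactly the statistic dual --- via $e_i\mapsto(i-1)-e_i$ --- to the zero-entry statistic marked by $q$ in Theorem \ref{T:1}. First I would therefore decompose each inversion sequence by its initial block of strictly increasing maximal entries $e_1=0,\,e_2=1,\dots,e_m=m-1$ (nonempty, since $e_1=0$ always), the precise mirror of the ``initial consecutive zeros'' decomposition that produced Theorem \ref{T:1}.

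The heart of the computation is to record, from the inversion-sequence descriptions of $\des,\ides,\rmax$ in Section \ref{S:inv}, how these three statistics and the normalisation $(1-u)^{-n}(1-x)^{-n}$ of \eqref{E:G5} respond to prepending one maximal entry. Summing the admissible values along the block turns the $x$-weight into a geometric series which, once the normalisation telescopes, amounts precisely to the substitution $x\mapsto xr^{n-1}$ together with a prefactor $r^{n-1}$; this is the source of $\T_n=r^{n-1}\CMcal{G}(t;xr^{n-1},u,v,1,1)$, the tail being governed by the $z=1$ formula of Theorem \ref{T:adr}. The per-step contribution of the block collapses to $\bigl((1-tz)r^{i-1}-1\bigr)/\bigl(u(r^i-1)\bigr)$, and since $r^i=(1-t)r^{i-1}$ one has $(1-tz)r^{i-1}-1=t(1-z)r^{i-1}+r^i-1$, so the product over a block of length $n$ is exactly $P_n:=\prod_{i=1}^{n-1}\frac{t(1-z)r^{i-1}+r^i-1}{u(r^i-1)}$. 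This is the $z$-analogue of the factor $(1-qt)r^{i-1}-1$ in \eqref{E:thm1}, making the $q\leftrightarrow z$ duality manifest.

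With this in hand, the decomposition is of renewal (first-return) type and yields a linear functional equation $\CMcal{G}(t;x,u,v,1,z)=N+S\cdot\CMcal{G}(t;x,u,v,1,z)$, where $S=\sum_{n\ge1}\frac{t(z-1)r^{n-1}}{u(r^n-1)}P_n$ is the generating function of a single maximal block terminated by one non-maximal drop that returns control to the start, and $N=\sum_{n\ge1}\frac{ztvr^{n-1}(1+ux\,\T_n)}{u(xr^{n-1}-1)}P_n$ collects the terminal contribution; in the summand of $N$ the factor $1$ handles a block that exhausts the whole sequence, while $ux\,\T_n$ handles a nonempty remainder, the $ux$ recording the descent and inverse descent created by the first post-block entry and $\T_n$ being the shifted $z=1$ series. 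Solving the fixed point gives $\CMcal{G}=N/(1-S)$, which is the asserted identity. As a check, at $z=1$ one has $S=0$ and $P_n=u^{-(n-1)}$, so the formula collapses to $\CMcal{G}(t;x,u,v,1,1)=\sum_{n\ge1}\frac{tvr^{n-1}(1+ux\,\T_n)}{u^n(xr^{n-1}-1)}$, a self-consistency identity for the closed form of Theorem \ref{T:adr} through $\T_n=r^{n-1}\CMcal{G}(t;xr^{n-1},u,v,1,1)$.

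The main obstacle is the middle step: justifying that the tail following a length-$n$ maximal block is exactly Theorem \ref{T:adr} evaluated at $x\mapsto xr^{n-1}$. This demands tracking the \emph{joint} effect of peeling the maximal prefix on all of $\des,\ides$ and $\rmax$ --- not $\lmax$ alone --- and checking that the shifted position constraints $0\le e_i\le i-1$ of the tail are absorbed precisely into the substitution $x\mapsto xr^{n-1}$ and the prefactor $r^{n-1}$ once the $(1-x)^{-n}$ normalisation is accounted for, together with confirming that a single drop contributes the clean kernel $t(z-1)r^{n-1}/\bigl(u(r^n-1)\bigr)$. Everything downstream --- recognising the renewal structure and summing the geometric series --- is then routine.
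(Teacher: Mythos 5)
You have picked out the right decomposition --- the paper's proof of Theorem \ref{T:4} is indeed built on the set $\CMcal{I}^{\bullet}$ of inversion sequences whose maximal entries form the initial strictly increasing run, the dual of the initial-consecutive-zeros decomposition behind Theorem \ref{T:1} --- and you have correctly anticipated that the tail is governed by Theorem \ref{T:adr} at $x\mapsto xr^{n-1}$. But two things go wrong. First, a bookkeeping error: by \eqref{E:bv} the variable $z$ marks $\lmax(\pi)=\zero(\Theta(\pi))$, the number of \emph{zeros} of the inversion sequence, while $q$ marks $\lmin(\pi)=\max(\Theta(\pi))$; you have these swapped. The maximal-run decomposition is used here not because $z$ lives on maximal entries, but because it lets one introduce a catalytic variable $q$ for $\cmax=\max$ and, at the very end, extract the coefficient of $q$ (the series $\beta_1$ of sequences with a single maximal entry), which is what converts a statement about the restricted class $\CMcal{I}^{\bullet}$ into one about all inversion sequences with $\max$ forgotten and $\zero$ retained.

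Second, and more seriously, the claimed ``renewal / first-return'' equation $\CMcal{G}=N+S\cdot\CMcal{G}$ is not available as a combinatorial decomposition, and this is exactly the obstacle you flag and then set aside as routine. An inversion sequence has only one initial maximal block, so there is no sequence of blocks to iterate over; and cutting after that block does not factor the statistics: $\dist$ counts distinct entries of the whole sequence, and $\asc$, $\rmin$ couple the block to the first tail entry. (Note also that your $S$ carries a factor $z-1$, so it cannot be the weight of any class of ``returning'' configurations.) What actually produces the answer in the paper is the functional equation \eqref{E:keyM1} in two catalytic variables --- $q$ for $\cmax$ and $w$ for $\ealm$ --- obtained from the bijections of Lemmas \ref{L:M1} and \ref{L:M2}, which handle $\dist$ by deleting and reinserting entries and renormalising; the equation is then specialised at $w=0$ and at the kernel root $w=1-(1-u)\rho$ to eliminate $\CMcal{M}(t;x,q,0,u,v,z)$, the resulting recursion in $q\mapsto qw$ is iterated, and only after extracting the coefficient of $q^0$ does the unknown $\beta_1=zt\,\CMcal{G}(t(u-1)(x-1);x,u,v,1,z)$ appear linearly on both sides --- that linear equation, not a first-return argument, is the source of the factor $(1-S)^{-1}$. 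Without this machinery, or a genuine substitute for it, the middle step of your plan cannot be completed.
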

\begin{remark}\label{Remark:2}
Let $\iasc(\pi):=n-1-\ides(\pi)$, then by Proposition 30 of \cite{js}, the quadruples
$(\des,\iasc,\rmax,\lmax)$ and $(\iasc,\des,\lmax,\lmin)$ have the same distribution over $\S_n$. In other words, let $\CMcal{F}(t;u,x,v,z)=x^{-1}\CMcal{G}(tx;x^{-1},u,v,1,z)$, then $\CMcal{F}(t;u,x,v,z)$ is the generating function of permutations with respect to the quadruple $(\des,\iasc,\rmax,\lmax)$, or equivalently $(\iasc,\des,\lmax,\lmin)$.
\end{remark}

Our next main result is proved by combining a typical decomposition of inversion sequences, namely successively add a new entry at the end; and a new bijection on inversion sequences.

\begin{theorem}\label{T:asczeromax}
	The generating function of permutations with respect to the triple $(\des,\lmax,\lmin)$, or equivalently $(\ides,\lmax,\lmin)$ of Euler-Stirling statistics is
	\begin{align*}
	&\,\quad \sum_{n=1}^{\infty}\sum_{\pi\in \S_n} \frac{u^{\des(\pi)}z^{\lmax(\pi)}q^{\lmin(\pi)} t^n}{(1-u)^n}
	=\sum_{n=0}^{\infty}\frac{qztu^n}{1-(n-q+1)t}\prod_{i=0}^{n}
	\frac{1-(i-q+1)t}{1-(i+z)t}.
	\end{align*}
	In fact, $(\des,\lmax,\lmin,\rmax)$ and $(\ides,\lmax,\lmin,\rmax)$ have the same distribution on $\S_n$.
\end{theorem}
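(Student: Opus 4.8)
The plan is to transfer everything to inversion sequences and then prove the two assertions separately: the closed-form generating function, by a recursion that successively appends a last entry, and the equidistribution $(\des,\lmax,\lmin,\rmax)\sim(\ides,\lmax,\lmin,\rmax)$, by an explicit bijection. First I would invoke the bijection $\S_n\to I_n$ of Section~\ref{S:inv} sending $\des$ to the ascent statistic $\asc$ on inversion sequences (which is indeed Eulerian on $I_n=\{(e_1,\dots,e_n):0\le e_i\le i-1\}$) and sending $\lmax,\lmin$ to their Euler--Stirling images, so that $u,z,q$ mark, respectively, $\asc$, the $\lmax$-image, and the $\lmin$-image. The point of the denominator $(1-u)^{-n}$ is to linearize the ascent statistic: expanding each factor $(1-u)^{-1}$ geometrically rewrites $u^{\asc}/(1-u)^{n}$ as a sum over weakly increasing ``level'' sequences refining the inversion sequence, with a strict increase forced exactly at each ascent (the Carlitz/$P$-partition device). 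After this step, building an inversion sequence by appending a new last entry $e_{n}\in\{0,1,\dots,n-1\}$ becomes a transfer operation whose effect on each marked statistic, including the level weight, can be read off directly.

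Next I would refine the generating function according to the running quantity that controls the transfer---for instance the current maximal entry, which governs both how many slots the new entry has and whether it produces a new $\lmax$-record or a new zero. Because the closed form in the statement is a bare sum of products, with no kernel factor to invert (in contrast to Theorems~\ref{T:1} and~\ref{T:4}), I expect this refinement to satisfy a first-order linear recurrence in the level parameter: appending one entry relates the level-$i$ series to the level-$(i-1)$ series, and the geometric summation over the value of the appended entry contributes the factor $\frac{1-(i-q+1)t}{1-(i+z)t}$. Iterating the recurrence accumulates the product $\prod_{i=0}^{n}\frac{1-(i-q+1)t}{1-(i+z)t}$ up to level $n$, the level-$n$ boundary term supplies the weight $qztu^{n}/(1-(n-q+1)t)$, and summing over $n$ yields the right-hand side; as a normalization check the coefficient of $t$ on both sides equals $qz/(1-u)$. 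The routine-but-delicate bookkeeping here is to decide, for each append move, exactly when $\asc$, the $\lmax$-image, and the $\lmin$-image each increase.

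Finally, for the equidistribution I would construct the new bijection realizing $\des\leftrightarrow\ides$ while simultaneously preserving all three Stirling statistics $\lmax$, $\lmin$ and $\rmax$. The obvious candidate $\pi\mapsto\pi^{-1}$ interchanges $\des$ and $\ides$ but already fails to preserve $\lmax$ (for instance $\lmax(231)=2$ whereas $\lmax(312)=1$), so a genuinely different map is needed. The route I would take is to exhibit two encodings $\S_n\to I_n$, one transporting $(\des,\lmax,\lmin,\rmax)$ and the other transporting $(\ides,\lmax,\lmin,\rmax)$ onto the same quadruple of inversion-sequence statistics, and then to compose them; equivalently, to build a single involution on $I_n$ exchanging the $\des$- and $\ides$-images while fixing the images of $\lmax,\lmin,\rmax$. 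I expect this bijection, together with the verification that it respects three Stirling statistics at once, to be the main obstacle, precisely because pinning down three Stirling statistics simultaneously is what excludes the naive maps; forgetting the $\rmax$-coordinate then immediately gives the triple identity $(\des,\lmax,\lmin)\sim(\ides,\lmax,\lmin)$ quoted in the theorem.
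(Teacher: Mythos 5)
Your overall strategy (pass to inversion sequences, build the series by appending a last entry, and prove the equidistribution by a bijection on $\I_n$ that swaps the images of $\des$ and $\ides$ while fixing those of $\lmax,\lmin,\rmax$) is exactly the paper's, but the proposal has a genuine gap at its hardest point: the bijection you describe in the last paragraph is never constructed, only postulated. You correctly observe that $\pi\mapsto\pi^{-1}$ fails and that what is needed is a map on inversion sequences exchanging $\asc$ and $\dist$ while preserving $\zero$, $\max$ and $\rmin$, and you call this ``the main obstacle'' --- but that obstacle \emph{is} the content of the theorem's second assertion. The paper supplies it as Theorem \ref{T:bij}: an inductive argument showing that $(\asc,\zero,\max,\mathsf{Rmin})$ and $(\dist,\zero,\max,\mathsf{Rmin})$ (with the \emph{set-valued} right-to-left-minimum statistic, which is what makes the induction close) are equidistributed, the inductive step being an explicit multi-stage rearrangement of entries that inserts a prescribed last value $j$ either as a new distinct entry or as a repeat, according to whether the appended entry creates an ascent. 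Without producing such a map, or the induction that replaces it, the second claim of the theorem remains unproved.

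On the generating-function half, your plan is essentially sound but the catalytic variable you propose is the wrong one. Whether appending $e_n$ creates an ascent is decided by comparison with the \emph{last} entry $e_{n-1}$, not with the current maximal value (which for an inversion sequence is just $n-1$, determined by the length); so refining by ``the current maximal entry'' cannot propagate the ascent count. The paper instead marks the last entry by a variable $w$, obtains the functional equation
\begin{align*}
\Bigl(1-\tfrac{wt(u-1)}{1-w}\Bigr)G(t;q,u,w,z)=qtz+ut\Bigl(q-\tfrac{1}{1-w}\Bigr)G(tw;q,u,1,z)+t\Bigl(z+\tfrac{w}{1-w}\Bigr)G(t;q,u,1,z),
\end{align*}
kills the left side by the kernel choice $w=(1+ut-t)^{-1}$ (so, contrary to your remark, the kernel method is still used here; it is only the final answer that lacks a $(1-\cdots)^{-1}$ factor), and iterates $t\mapsto tw$ to accumulate the product $\prod_{i}\frac{1-(i-q+1)t}{1-(i+z)t}$. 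Your $P$-partition linearization of $(1-u)^{-n}$ is not needed and does not by itself repair the missing dependence on the last entry.
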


The rest of the paper is organized as follows: We provide in the next section some background on the bijection between permutations and inversion sequences, in order to interpret the generating function $\CMcal{G}(t;x,u,v,q,z)$ in terms of inversion sequences. The proof of Theorem \ref{T:1} and Corollary \ref{C:1} are presented in Section \ref{sec:3}. Section \ref{S:4} is concerned with proofs of Theorem \ref{T:adr} and \ref{T:4}. In Section \ref{S:5}, a new bijection on inversion sequences and Theorem \ref{T:asczeromax} are established. 
We conclude with some final remarks in Section \ref{S:final}.

\section{From permutations to inversion sequences}\label{S:inv}
The purpose of this section is to interpret the joint distribution of Euler-Stirling statistics in the language of inversion sequences.

An {\em inversion sequence} $(s_1,s_2,\ldots,s_n)$ is a sequence of non-negative integers such that for all $i$, $0\le s_i<i$. Let  $\CMcal{I}_n$ denote the set of inversion sequences of length $n$, which is in one-to-one correspondence with the set $\mathfrak{S}_n$ of permutations via a bijection by Baril and Vajnovszki \cite{bv}: There is a bijection $\Theta:\S_n\rightarrow \I_n$ satisfying that for
any $\pi\in\mathfrak{S}_n$,
\begin{align}\label{E:bv}
(\des, \ides,\lmin,\lmax,\rmax)\pi=(\asc,\dist,\max,\zero,\rmin)\Theta(\pi).
\end{align}
The statistics on the RHS are defined as follows: For any $s=(s_1,\ldots,s_n)\in\I_n$, let
\begin{align*}
\asc(s)&:=|\{i\in[n-1]: s_i<s_{i+1}\}|,\\
\dist(s)&:=\vert\{s_1,s_2,\ldots,s_n\}\vert-1,\\
\zero(s)&:=\vert\{i\in[n]: s_i=0\}\vert,\\
\max(s)&:=\vert\{i\in[n]:s_i=i-1\}\vert,\\
\rmin(s)&:=|\{s_i: s_i<s_j\text{ for all $j>i$}\}|,
\end{align*}
be the number of ascents, distinct non-zero entries, zeros, maximal entries and the number of right-to-left minima of $s$, respectively. By the classification of Eulerian and Stirling statistics, the first two are Eulerian, while the other three are Stirling. Let us also note that before (\ref{E:bv}), Foata \cite{fo} established a pioneering bijection that transforms every pair $(\asc,\dist)$ of Eulerian statistics on inversion sequences to a pair $(\des,\ides)$ on permutations.
It follows from (\ref{E:bv}) that 
\begin{align*}
\CMcal{G}(t;x,u,v,q,z)&=\sum_{n=1}^{\infty}t^n\sum_{s\in\I_n}
\frac{u^{\asc(s)}x^{\dist(s)}q^{\max(s)}v^{\rmin(s)}z^{\mathsf{zero}(s)}}
{(1-u)^n(1-x)^n}.
\end{align*}
Of these five Euler-Stirling statistics, the most difficult one to keep track of is $\dist$, because whether an entry is distinct or not depends on the entire sequence. The first two decompositions of inversion sequences have the advantage of locally recording {\em both} the numbers of distinct entries and ascents, while the third one in Section \ref{S:5} seems not.

\section{Proof of Theorem \ref{T:1} and Corollary \ref{C:1}}\label{sec:3}

Our proof strategy is to count a {\em subset} of inversion sequences, that is, the ones with zeros only appearing at the beginning; and then recover all inversion sequences with respect to the statistics $\asc,\dist,\max,\rmin$ by focusing on the ones with exclusive one zero. 

Let us introduce some necessary definitions and notations.
\begin{definition}For a sequence $s=(s_1,\ldots,s_n)\in\I_n$, let $\czero(s)$ be the number of the initial {\bf c}onsecutive {\bf zero}s of $s$, let $\ealz(s)$ be the {\bf e}ntry {\bf a}fter the {\bf l}ast {\bf z}ero of $s$ if $s$ is not ended with $0$; otherwise set $\ealz(s)=0$. For example, for $s=(0,0,2,1,3,2)$, then $\czero(s)=\ealz(s)=2$. 
\end{definition}
The statistic $\czero$ is sometimes called run in some literature (see for instance \cite{kr}). Let $\I:=\cup_{n\ge 1}\I_n$ be the set of inversion sequences, let $s\backslash \{i_1,i_2,\ldots,i_k\}:=\{s_1,\ldots,s_n\}\backslash\{i_1,i_2,\ldots,i_k\}$ denote the set of entries of $s$ other than $i_1,i_2,\ldots,i_k$. Then define 
\begin{align*}
\CMcal{I}^*:=\{s\in\CMcal{I}:\mathsf{czero}(s)=\zero(s) \mbox{ and } \min(s\backslash\{0\})=1\}
\end{align*}
as a subset of $\CMcal{I}$. Let $\CMcal{P}(t;u,x,z,w,v,q)$ be the generating function of inversion sequences from $\CMcal{I}^*$ counted by the statistics $\asc,\dist,\max,\rmin,\czero,\ealz$, namely,
\begin{align*}
\CMcal{P}(t;u,x,z,w,v,q):&=\sum_{s\in\I^*}t^{|s|}
u^{\asc(s)}x^{\dist(s)}q^{\max(s)}v^{\rmin(s)}z^{\czero(s)}w^{\czero(s)-\ealz(s)}.
\end{align*}
We next partition the set $\I^*$ into three disjoint subsets:
\begin{align*}
\CMcal{P}_{1}&:=\{s\in\CMcal{I}^*: |s|=\mathsf{czero}(s)+1\},\\
\CMcal{P}_{2}&:=\{s\in\CMcal{I}^*\,\backslash \CMcal{P}_1: \mathsf{ealz}(s)\textrm{ appears only once in }s\},\\
\CMcal{P}_{3}&:=\{s\in\CMcal{I}^*\,\backslash \CMcal{P}_1: \mathsf{ealz}(s)\textrm{ appears more than once in }s\}.
\end{align*}
For each subset, we are going to determine a generating function formula, which contributes to a functional equation of $\P(t;u,x,z,w,v,q)$ in the proof of Theorem \ref{T:1}.
\begin{lemma}\label{L:p1}
The generating function for $\CMcal{P}_1$ is 
	\begin{align*}
	&\quad \sum_{s\in\CMcal{P}_1}t^{\vert s\vert}u^{\asc(s)}x^{\mathsf{dist}(s)}q^{\max(s)}v^{\rmin(s)}
	z^{\mathsf{czero}(s)}w^{\mathsf{czero}(s)-\mathsf{ealz}(s)}\\
	&=\frac{zqxuv^2t^2(q-zw(q-1)t)}{1-zwt}.
	\end{align*}
\end{lemma}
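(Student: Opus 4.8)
The plan is to compute the generating function for $\CMcal{P}_1$ directly, by giving an explicit description of every sequence in $\CMcal{P}_1$ and then reading off the six statistics from that description. By definition $\CMcal{P}_1$ consists of those $s \in \CMcal{I}^*$ with $|s| = \czero(s) + 1$; that is, $s$ is a block of $k \ge 1$ initial zeros followed by exactly one nonzero entry. Since membership in $\CMcal{I}^*$ forces $\czero(s) = \zero(s)$ and $\min(s \backslash \{0\}) = 1$, the single trailing entry must equal $1$. Moreover, being the $(k+1)$-st entry of an inversion sequence, this entry $1$ satisfies $0 \le 1 < k+1$, which holds precisely when $k \ge 1$; so the sequences in $\CMcal{P}_1$ are exactly $s = (0,0,\ldots,0,1)$ with $k$ zeros, for each $k \ge 1$.

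First I would evaluate the six statistics on the generic element $s = (\underbrace{0,\ldots,0}_{k},1)$. The length is $|s| = k+1$. There is exactly one ascent, namely at position $k$ where $0 < 1$, so $\asc(s) = 1$; the set of distinct entries is $\{0,1\}$, so $\dist(s) = 1$. For the maximal entries, note $s_i = i-1$ holds at the first position ($s_1 = 0$) and at the last position ($s_{k+1} = 1 = (k+1)-1$ only when $k+1-1 = 1$, i.e. $k = 1$); here I must be careful and count $\max(s)$ exactly. For $k \ge 2$ the only maximal entry is $s_1 = 0$, giving $\max(s) = 1$, whereas for $k = 1$ both $s_1 = 0$ and $s_2 = 1$ are maximal, giving $\max(s) = 2$. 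This split is precisely the source of the two-term numerator $q - zw(q-1)t$. The right-to-left minima of $(0,\ldots,0,1)$ are the value $0$ (the run of zeros contributes a single distinct right-to-left minimum value) together with the trailing $1$, so $\rmin(s) = 2$. Finally $\czero(s) = k$ and $\ealz(s) = 1$ (the entry after the last zero), so the exponent of $w$ is $\czero(s) - \ealz(s) = k-1$.

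Next I would assemble the sum. Treating the case $k = 1$ separately and $k \ge 2$ as the generic family, the contribution is
\begin{align*}
\sum_{s\in\CMcal{P}_1}t^{|s|}u^{\asc(s)}x^{\dist(s)}q^{\max(s)}v^{\rmin(s)}z^{\czero(s)}w^{\czero(s)-\ealz(s)}
&= q^2 x u v^2 t^2 z + \sum_{k\ge 2} q\, x u v^2 t^{k+1} z^{k} w^{k-1},
\end{align*}
where the first summand is the $k=1$ term with $\max = 2$ and $w$-exponent $0$, and the geometric tail with ratio $zwt$ accounts for all $k \ge 2$. Summing the geometric series and combining the $k=1$ term over the common denominator $1 - zwt$ then yields the closed form $\dfrac{zqxuv^2t^2\bigl(q - zw(q-1)t\bigr)}{1 - zwt}$.

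The routine part is the geometric summation and the final algebraic simplification; these I would carry out but not belabor here. The one genuinely delicate step — the main obstacle — is getting $\max(s)$ right, since the boundary case $k=1$ behaves differently from $k \ge 2$, and it is exactly this boundary exception that produces the factor $q - zw(q-1)t$ rather than a single monomial in the numerator. I would therefore verify the $k=1$ and $k=2$ terms by hand against the claimed formula before trusting the general computation.
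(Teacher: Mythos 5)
Your proposal is correct and follows essentially the same route as the paper: both identify $\CMcal{P}_1$ as the sequences $(0,\ldots,0,1)$ with $k\ge 1$ zeros, read off the statistics (with the only delicate point being $\max(s)=2$ when $k=1$ versus $\max(s)=1$ when $k\ge 2$), and sum a geometric series with ratio $zwt$. The paper merely organizes the bookkeeping differently, writing a uniform sum over all $k\ge1$ plus a correction term $q(q-1)xuv^2zt^2$ for $k=1$ (and its displayed intermediate denominator $1-zt$ appears to be a typo for $1-zwt$), arriving at the same closed form you obtain.
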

\begin{proof}
Every $s$ in $\P_1$ has the form $(0,0,\ldots,0,1)$, which implies that the LHS of the above equation equals 
\begin{align*}
&\frac{zqxuv^2t^2}{1-zt}+q(q-1)xuv^2zt^2=\frac{zqxuv^2t^2(q-zw(q-1)t)}{1-zwt}.
\end{align*}
\end{proof}
Let $\alpha_1(t;u,x,v,q):=[z]\P(t;u,x,z,w,v,q)$, that is, the generating function of inversion sequences from $\I^*$ (also $\I$) with only one zero. Then,
\begin{lemma}\label{Lzero:case2}
	The generating function for $\P_2$ is
	\begin{align*}
	\nonumber&\,\quad \sum_{s\in\P_2}t^{\vert s\vert}u^{\asc(s)}x^{\mathsf{dist}(s)}z^{\mathsf{czero}(s)}w^{\mathsf{czero}(s)-\mathsf{ealz}(s)}v^{\rmin(s)}q^{\max(s)}\\
	&=tx\left(q+\frac{w}{1-w}\right)\CMcal{P}(t;u,x,z,1,v,q)+\frac{tx(u-1)}{1-w}
	\CMcal{P}(t;u,x,z,w,v,q)\\
	&\qquad\qquad+tx(u-1)(q-1)\CMcal{P}(t;u,x,z,0,v,q)\\
	&\qquad\qquad+\left(\frac{tux(v-vw-1)}{w(1-w)}+\frac{zt^2uxv}{1-zwt}\right)
	\CMcal{P}(t;u,x,zw,1,v,q)\\
	&\qquad\qquad +tux(q-1)(v-1)\alpha_1(t;u,x,v,q).
	\end{align*}
\end{lemma}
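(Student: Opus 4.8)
The plan is to construct every sequence of $\P_2$ from a strictly shorter sequence of $\I^*$ by inserting one new entry, and then to read off the recursion this induces for the generating function. Write $s\in\P_2$ as $s=(0^k,m,s_{k+2},\dots,s_n)$ with $k=\czero(s)\ge 1$ and $m=\ealz(s)$; by definition of $\P_2$ the value $m$ occurs exactly once in $s$. I would reduce $s$ to a sequence $s'$ by deleting this entry, shifting the remaining tail one step to the left, and decrementing by $1$ every tail entry that exceeds $m$; conversely $s=\sigma_m(s')$ is recovered by inserting $m$ immediately after the initial zeros and incrementing every tail entry $\ge m$. The first task is to verify that $\sigma$ is a bijection between $\{(s',m):s'\in\I^*,\ 1\le m\le\czero(s')\}$ and $\P_2$. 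The decrement is precisely what keeps $s'$ a genuine inversion sequence after the shift, and it preserves the maximal entries of the tail: a tail entry with $s_j=j-1$ satisfies $s_j\ge k+1>m$, hence it is decremented and, once shifted to position $j-1$, still equals $(j-1)-1$.

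With the bijection in hand I would track the six relevant statistics. Putting $b=\ealz(s')$, the transfer rules should come out as $|s|=|s'|+1$, $\dist(s)=\dist(s')+1$ (the entry $m$ is a brand-new distinct value, which explains the ubiquitous factor $tx$), $\czero(s)=\czero(s')$, $(\czero-\ealz)(s)=(\czero-\ealz)(s')+(b-m)$, together with the three indicator shifts $\asc(s)=\asc(s')+[\,m\le b\,]$, $\max(s)=\max(s')+[\,m=k\,]$ and $\rmin(s)=\rmin(s')+[\,m=1\,]$. Summing the weight of $\sigma_m(s')$ over $m=1,\dots,k$ then rewrites $\sum_{s\in\P_2}(\cdots)$ as a sum over $s'\in\I^*$ of its weight times $tx\sum_{m=1}^{k}u^{[m\le b]}v^{[m=1]}q^{[m=k]}w^{\,b-m}$. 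Expanding $u^{[m\le b]}=1+(u-1)[\,m\le b\,]$ and carrying out the (finite) geometric sums in $w$ should collapse the whole expression into a combination of the specializations $\P(t;u,x,z,1,v,q)$, $\P(t;u,x,z,w,v,q)$, $\P(t;u,x,z,0,v,q)$ and $\P(t;u,x,zw,1,v,q)$: the pure $z^{\czero}$ part yields the $w=1$ evaluation, retaining $w^{\czero-\ealz}$ yields the $w$ evaluation, the boundary $m=k$ isolates the $w$-degree-zero part and so yields the $w=0$ evaluation, and the surplus $w^{\czero}$ coming out of the geometric sums yields the $z\mapsto zw,\,w=1$ evaluation.

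The delicate case, and what I expect to be the main obstacle, is $m=1$. It is exceptional for two independent reasons. First, it is the only value for which the removed entry is itself a right-to-left minimum; this is the sole source of the variable $v$ in the transfer, and it is why the first three terms of the claimed formula carry no $v$ while the last two do. Second, it is the only value for which deleting the entry can violate the defining condition $\min(s\setminus\{0\})=1$, so that the reduced sequence falls outside $\I^*$ and cannot be folded directly into $\P$. I would isolate these $m=1$ contributions and re-express them through $\P(t;u,x,zw,1,v,q)$ and the one-zero series $\alpha_1(t;u,x,v,q)=[z]\P$; the denominator $1-zwt$ surfacing in the coefficient of $\P(t;u,x,zw,1,v,q)$ has the same origin as in Lemma \ref{L:p1}, namely the geometric sum over the block of initial zeros of the genuinely reduced piece. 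A last bookkeeping subtlety is the corner $m=\czero(s')=1$, where the $m=1$ and $m=k$ corrections coincide; disentangling this overlap is what forces the product $(q-1)(v-1)$ in the $\alpha_1$-term. Assembling the five resulting groups of terms should reproduce the stated identity.
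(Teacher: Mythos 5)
Your proposal is correct and follows essentially the same route as the paper: the paper splits $\P_2$ into three subsets and applies exactly your deletion/insertion map (its $f_1$) with the same statistic transfers, handles the sequences whose reduction leaves $\I^*$ (your delicate $m=1$ case) by the same two-stage reduction producing the $\frac{zt^2uxv}{1-zwt}$ factor, and obtains the $(q-1)(v-1)\alpha_1$ term from the same $m=\czero=1$ corner. The only imprecision is your opening claim that $\sigma$ is a bijection onto all of $\P_2$ --- it is only onto the part whose reduction stays in $\I^*$ --- but you identify and repair exactly this point later.
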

\begin{proof}
	For every sequence $s\in\I^*\backslash\P_1$ with $\czero(s)=p$, $s$ has the form $(0,0,\ldots,0,i,j,\ldots)$ with $j\ge 1$ and $1\le i\le p$.  
	Now we divide the set $\P_2$ into three disjoint ones, according to the change of statistic $\asc$.
	\begin{align*}
	\P_{2,1}&:=\{s\in\P_2: \ealz(s)>s_{\czero(s)+2},\,\min\{s_i: i\ge \czero(s)+2\}=1\},\\
	\P_{2,2}&:=\{s\in\P_2: \ealz(s)<s_{\czero(s)+2},\,\min\{s_i: i\ge \czero(s)+2\}\le 2\},\\
	\P_{2,3}&:=\{s\in\P_2: \ealz(s)=1,\,\min\{s_i: i\ge \czero(s)+2\}\ge 3\}.
	\end{align*}
	We are going to treat each subset $\P_{2,k}$ separately. For the subset $\P_{2,1}$, there is a bijection 
	\begin{align*}
	f_1:\P_{2,1}\cap\I_n\rightarrow \{(s,j):s\in\I^*\cap\I_{n-1}\, \mbox{ and }\, j>\ealz(s)\}
	\end{align*}
	that sends $s$ to a pair $(s',j)$ where $s'$ is obtained from $s$ by removing $\ealz(s)$ and replacing any entry $y$ by $y-1$ whenever $y>\ealz(s)$. Take $j=\ealz(s)>\ealz(s')$. It is readily seen that 
	\begin{align*}
	(\asc,\zero,\rmin)s=(\asc,\zero,\rmin)s',
	\end{align*}
	$\dist(s)=\dist(s')+1$ and $|s|=|s'|+1$. In fact $f_1$ is a bijection and its inverse map $f_1^{-1}$ tells us how to construct the generating function for $\P_{2,1}$ from the one $\P(t;u,x,z,w,v,q)$ for $\I^*$, To be precise, we do the following substitutions (assume that $\czero(s)=p$ and $\ealz(s)=j$)
	\begin{align*}
	\quad w^{p-j}\rightarrow w^{p-j-1}+\cdots+w^{1}+qw^{0}&=\frac{1-w^{p-j}}{1-w}+(q-1),\\
	&=q+\frac{w}{1-w}-\frac{w^{p-j}}{1-w}\quad \text{for  $1\le j\le p-1$},
	\end{align*}
	and $w^0\rightarrow 0$ in $(tx)\CMcal{P}(t;u,x,z,w,v,q)$, obtaining the generating function for $\P_{2,1}$:
	\begin{align}\label{E:2part1}
	\nonumber&tx\left(q+\frac{w}{1-w}\right)\CMcal{P}(t;u,x,z,1,v,q)-\frac{tx}{1-w}
	\CMcal{P}(t;u,x,z,w,v,q)\\
	&\qquad-tx(q-1)\CMcal{P}(t;u,x,z,0,v,q).
	\end{align}
     Apply the same bijection $f_1$ on $\P_{2,2}$, that is,
     \begin{align*}
     f_1:\P_{2,2}\cap\I_n\rightarrow \{(s,j):s\in\I^*\cap \I_{n-1}\, \mbox{ and }\, 1\le j\le \ealz(s)\}
     \end{align*}
      sends $s$ to a pair $(s',j)$ with the only differences that $j=\ealz(s)\le \ealz(s')$ and $\asc(s)=\asc(s')+1$. Here we have to distinguish the situations when the statistics $\rmin$ and $\max$ are increased by one, respectively. In terms of generating functions, we do the substitution (assume that $\czero(s)=p$ and $\ealz(s)=j$) 
	\begin{align*}
	\quad w^{p-j}&\rightarrow vw^{p-1}+\cdots+w^{p-j}=\frac{w^{p-j}-w^{p}}{1-w}+(v-1)w^{p-1}\quad \text{for  $1\le j\le p-1$},\\
	w^0&\rightarrow vw^{p-1}+\cdots +w^1+qw^0=\frac{1-w^{p}}{1-w}+(v-1)w^{p-1}+q-1 \quad \text{ for  $j=p\ne 1$,}\\
	w^0&\rightarrow qv \quad \text{ for  $j=p=1$}
	\end{align*}
	in $tux\,\CMcal{P}(t;u,x,z,w,v,q)$, leading to the generating function for $\P_{2,2}$:
	\begin{align}\label{E:2part2}
	\nonumber&\frac{tux}{1-w}\CMcal{P}(t;u,x,z,w,v,q)+\frac{tux(v-vw-1)}{w(1-w)}
	\CMcal{P}(t;u,x,zw,1,v,q)\\
	&\qquad+tux(q-1)\CMcal{P}(t;u,x,z,0,v,q)+tux(q-1)(v-1)\alpha_1(t;u,x,v,q).
	\end{align}
    We next discuss the remaining subset $\P_{2,3}$. There is a bijection
	\begin{align}\label{E:f2}
	f_2:\P_{2,3}\cap\I_n\rightarrow \dot\bigcup_{k\ge 1}\{s\in\I_{n-1}:\czero(s)=\zero(s)\,\mbox{ and }\,\min(s\backslash \{0\})=k+1\}
	\end{align}
	that transforms $s\in\P_{2,3}$ to an inversion sequence $s'$ with smallest non-zero entry at least $2$. This is achieved by removing the unique entry $1$ and decrease every non-zero entry by $1$ in $s$. Furthermore, 
	$|s'|=|s|+1$ and the value of the statistics $\asc,\dist,\rmin$ are decreased by one. On the other hand, every sequence $s\in\I$ satisfying $\czero(s)=\zero(s)$ and $\min(s\backslash \{0\})=k+1$ can be reduced to a sequence from $\I^*$. That is,
	\begin{align*}
	f_3: \{s\in\I_n:\czero(s)=\zero(s)\,\mbox{ and }\,\min(s\backslash \{0\})=k+1\}
	\rightarrow \{(s,k):s\in\I^*\cap\I_{n-k}\} 
	\end{align*}
    is a bijection defined as follows: Given a sequence from the LHS, it must contain at least $k$ zeros, we remove $k$ zeros from the beginning of $s$ and reduce every non-zero entry by $k$. Let $s'$ be the resulting sequence, it is not hard to examine that $(s',k)$ belongs to the image set of $f_3$ and the value of statistics $\zero,\ealz$ are decreased by $k$. Using the bijection $f_3^{-1}$, we can express the generating function $\P^*$ for the set on the RHS of (\ref{E:f2}), namely, 
    \begin{align*}
    \P^*=\sum_{k=1}^{\infty}(zwt)^k\CMcal{P}(t;u,x,z,w,v,q).
    \end{align*}
     Then according to the bijection $f_2^{-1}$, we do the substitution $w^{p-j}\rightarrow w^{p-1}$ in the generating function $tuxv\,\P^*$, yielding the generating function for $\P_{2,3}$, i.e.,
     \begin{align}\label{E:2part3}
     \frac{tuxv}{w}\sum_{k=1}^{\infty}(zwt)^k\CMcal{P}(t;u,x,zw,1,v,q)=
     \frac{zt^2uxv}{1-zwt}\CMcal{P}(t;u,x,zw,1,v,q).
     \end{align}
     Adding the terms (\ref{E:2part1}), (\ref{E:2part2}), (\ref{E:2part3}), we establish the formula in Lemma \ref{Lzero:case2}.
\end{proof}
\begin{lemma}\label{Lzero:case3}
	The generating function for $\P_3$ is
	\begin{align}
	\nonumber&\,\quad \sum_{s\in\P_3}t^{\vert s\vert}u^{\asc(s)}x^{\mathsf{dist}(s)}z^{\mathsf{czero}(s)}w^{\mathsf{czero}(s)-\mathsf{ealz}(s)}v^{\rmin(s)}q^{\max(s)}
	\label{eqL:case3}\\
	\nonumber&=\frac{1-zt}{z}\left(q+\frac{w}{1-w}\right)\CMcal{P}(t;u,x,z,1,v,q)+\frac{(u-1)(1-zt)}{z(1-w)}\CMcal{P}(t;u,x,z,w,v,q)\\
	\nonumber&\qquad+(q-1)(u-1)\frac{1-zt}{z}\CMcal{P}(t;u,x,z,0,v,q)\\
	&\qquad-\frac{u(1-zt)}{zw(1-w)}\CMcal{P}(t;u,x,zw,1,v,q)
	-u(1-zt)(q-1)\alpha_1(t;u,x,q,v).
	\end{align}
\end{lemma}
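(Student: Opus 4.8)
The plan is to mirror the proof of Lemma~\ref{Lzero:case2}, exploiting the defining feature of $\CMcal{P}_{3}$: the entry $\ealz(s)$ occurs at least twice. First I would record the structural form of a generic $s\in\CMcal{P}_{3}$, namely $s=(0^{p},i,s_{p+2},\ldots,s_{n})$ with $p=\czero(s)\ge 1$, $i=\ealz(s)$ satisfying $1\le i\le p$, and $i$ reappearing among $s_{p+2},\ldots,s_{n}$. The crucial consequences of this recurrence are two simplifications absent from the $\CMcal{P}_{2}$ analysis. Since the value $i$ survives after we dispose of its copy in position $p+1$, the statistic $\dist$ is \emph{unchanged}; this is exactly why the prefactor $tx$ of Lemma~\ref{Lzero:case2} is replaced here by a factor carrying no $x$. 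Moreover, because $i$ reappears to the right of position $p+1$, that position is \emph{not} a right-to-left minimum, while the last of the leading zeros already is one; hence $\rmin$ is preserved as well, which explains why the $v$-dependent corrections of Lemma~\ref{Lzero:case2} (in particular the terms weighted by $(v-1)$) do not appear in Lemma~\ref{Lzero:case3}.

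Next I would partition $\CMcal{P}_{3}$ into three blocks $\CMcal{P}_{3,1},\CMcal{P}_{3,2},\CMcal{P}_{3,3}$ according to the comparison of $\ealz(s)=i$ with the following entry $s_{p+2}$, exactly as $\CMcal{P}_{2}$ was split; this comparison governs whether the distinguished operation lowers $\asc$ by one or leaves it fixed, and therefore dictates the power $u^{0}$ or $u^{1}$ inserted block by block. On each block I would apply the analogue of $f_{1},f_{2},f_{3}$: merge the position-$(p+1)$ copy of $\ealz(s)$ into the leading zero-block (equivalently, set $s_{p+1}=0$), producing a sequence $s'\in\CMcal{I}^{*}$ whose $\dist$ and $\rmin$ coincide with those of $s$, whose $\czero$ is raised by one (this is the source of the factor $z^{-1}$), and whose $\ealz$ becomes $s_{p+2}$. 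Reading these maps backwards then prescribes the substitutions to perform in $\CMcal{P}(t;u,x,z,w,v,q)$: the monomial $w^{p-i}$ is expanded into a geometric sum of the same shape as in \eqref{E:2part1}--\eqref{E:2part2}; the maximal-entry case $i=p$ contributes the $q$- and $(q-1)$-weighted terms; and an $f_{3}$-type leading-zero renormalization accounts for the evaluation $\CMcal{P}(t;u,x,zw,1,v,q)$ in the fourth term, while the coefficient $\alpha_{1}$ records the one-zero boundary configuration in the last term.

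The step I expect to be the main obstacle is precisely this $\czero/\ealz$ bookkeeping, which produces the prefactor $\tfrac{1-zt}{z}$ in place of the $tx$ of Lemma~\ref{Lzero:case2}. Unlike the unique-entry case, several sequences of $\CMcal{P}_{3}$ differing only in the recurring value $i$ placed in position $p+1$ collapse onto the same reduced sequence $s'$, so the map is many-to-one and one must sum over \emph{all} admissible values $1\le i\le p$ that genuinely occur to the right of $s'$. Carrying out this summation correctly---combining the re-indexing of the leading-zero block (the factor $z^{-1}$) with the geometric sum over the absorbed zeros and the exclusion of the degenerate configurations already counted in $\CMcal{P}_{1}$, which I expect to supply the compensating $-zt$ in the numerator $1-zt$---is where the delicate inclusion-exclusion lives. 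Once the substitutions are assembled on each of $\CMcal{P}_{3,1},\CMcal{P}_{3,2},\CMcal{P}_{3,3}$ and the geometric sums are evaluated, collecting the five contributions and simplifying should yield the closed form asserted in Lemma~\ref{Lzero:case3}; I would finish by checking the $w\to 1$ specialization and the coefficient of $z$ against $\alpha_{1}$ as an internal consistency test.
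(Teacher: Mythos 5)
Your overall strategy matches the paper's: you correctly identify that the right move is to zero out the entry $\ealz(s)$ in place (rather than delete it), which raises $\czero$ by one and explains the $z^{-1}$, preserves $\dist$ because the value recurs, preserves $\rmin$ because the recurrence prevents that position from being a right-to-left minimum, and turns the construction into a map onto pairs $(s^*,i)$ with $i\in s^*$. The split by comparing $\ealz(s)$ with $s_{\czero(s)+2}$ is also what the paper does (it uses two blocks, $\ealz(s)\ge s_{\czero(s)+2}$ and $\ealz(s)<s_{\czero(s)+2}$, not three, but that is cosmetic).

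However, there is a genuine gap exactly at the step you flag as the main obstacle, and the one concrete mechanism you propose for it is wrong. To sum over the admissible values $i$ one must know, for each reduced sequence $s^*\in\I^*$, the generating function of the pairs $(s^*,i)$ with $i\in s^*$ in the relevant range; the paper obtains this by inclusion--exclusion on the complementary condition $i\notin s^*$, and the crux is a \emph{separate family of bijections} $\xi_i$ (one for each fixed $i\ge 2$), each of which deletes one leading zero and decrements every entry exceeding $i$. This shows that for every fixed $i$ the set $\{s\in\I^*: i\notin s,\ \ealz(s)<i<\czero(s)\}$ has generating function $ztw\,(\P(t;u,x,z,w,v,q)-\P(t;u,x,z,0,v,q))$, \emph{uniformly in} $i$, so its complement contributes $(1-ztw)\P(\cdot,w)+ztw\,\P(\cdot,0)$ (and similarly $zt(\P-\gamma_1)$ versus $(1-zt)(\P-\gamma_1)$ in the other block). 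That deleted zero --- weight $zt$ for one unit of length and one zero --- is the true source of the numerator $1-zt$ in the lemma, not an exclusion of configurations already counted in $\P_1$ as you suggest; $\P_1$ plays no role in Lemma~\ref{Lzero:case3}. Without this family $\xi_i$ and the uniformity-in-$i$ statement, the ``sum over all admissible $1\le i\le p$ that genuinely occur'' cannot be evaluated, so the proposal as written does not reach the closed form.
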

\begin{proof}
	Every sequence $s\in \P_3$ with $\mathsf{czero}(s)=p-1$ and $\mathsf{ealz}(s)=i$ has the form
	$$
	s=(0,0,\ldots,0,i,j,s_{p+2},\ldots)
	$$
	where $1\le i\le p-1$, $1\le j\le p$ and the entry $i$ appears at least twice. Define a map
	\begin{align}\label{E:xi1}
	\xi_1:\P_3\cap\I_n\rightarrow \{(s,i):s\in \I^*\cap\I_n \,\mbox{ and }\, i\in s\,\mbox{ and } 1\le i<\czero(s)\},
	\end{align}
	by $\xi_1(s)=(s^*,\ealz(s))$ where $s^*=(0,0,\ldots,0,0,j,s_{p+2},\ldots)$ satisfying $\czero(s^*)=p$ and $0\ne \ealz(s)\in s^*$. In order to find the generating function for such sequences $s^*$, we count its complement. That is, for a given $i$ with $i\ge 2$, we derive the generating function for sequences $s\in \I^*$ such that $i\not\in s$. Define a bijection (for a fixed $i$)
	\begin{align}\label{E:xi11}
	\xi_i: \{s\in \I^*\cap\I_n: i\not \in s \mbox{ and }\,\czero(s)\ge i+1\}\rightarrow \{s\in \I^*\cap\I_{n-1}: \czero(s)\ge i\}
	\end{align}
	by $\xi_i(s)=s'$ where $s'$ is obtained from $s$ by removing the last zero and decrease every entry $y$ by one if $y>i$. The change of statistic $\ealz$ depends on the value of $\ealz(s')$, so we have to discuss the case when $\ealz(s)<i<\czero(s)$ occurs. For a given $i\ge 2$, let
	\begin{align*}
	\CMcal{S}_{1}(i):&=\{s\in \I^*: i\not \in s\,\mbox{ and }\, \ealz(s)<i<\czero(s)\},\\
	\CMcal{S}_2(i):&=\I^*-\CMcal{S}_1(i)
	=\{s\in\I^*:\ealz(s)=\czero(s)\, \mbox{ or }\,\czero(s)-1\}\\
	&\qquad\qquad\qquad\qquad\dot\cup \{s\in\I^*: i\in s\,\mbox{ and }\,\ealz(s)<i<\czero(s)\}.
	\end{align*}
	The bijection $\xi_i$ (see (\ref{E:xi11})) restricted to $\CMcal{S}_1(i)$ gives
	\begin{align}\label{E:xi21}
    \xi_i: S_1(i)\cap\I_n\rightarrow \{s\in \I^*\cap\I_{n-1}:\ealz(s)< i\le \czero(s)\}
	\end{align}
	such that the values of statistics $\asc,\dist,\rmin,\max,\ealz$ stay the same, while $\czero$ and $\czero-\ealz$ are decreased by one. Observe that only when $s$ satisfies $\ealz(s)\ne \czero(s)$, there is an $i$ such that $(s,i)$ belongs to the image set of (\ref{E:xi21}).
	Let us recall that the generating function for the sequences $s\in\I^*$ such that $\ealz(s)\ne \czero(s)$ is $\CMcal{P}(t;u,x,z,w,v,q)-\CMcal{P}(t;u,x,z,0,v,q)$, then it follows from (\ref{E:xi21}) that for any fixed $i$, the generating function for the set $\CMcal{S}_1(i)$ is	
	\begin{align}\label{E:genS1}
	ztw(\CMcal{P}(t;u,x,z,w,v,q)-\CMcal{P}(t;u,x,z,0,v,q)).
	\end{align}
	This indeed implies the generating function
	\begin{align}\label{E:genS2}
	(1-ztw)\CMcal{P}(t;u,x,z,w,v,q)+ztw\CMcal{P}(t;u,x,z,0,v,q),
	\end{align}
	 for its complement set $\CMcal{S}_2(i)$. In the next step, we will present the generating function for $\P_3$ through the bijection $\xi_1$ (see (\ref{E:xi1})). For this purpose, we partition
	the set $\P_3$ into two disjoint subsets:
	\begin{align*}
	\P_{3,1}:=\{s\in\P_3: \ealz(s)\ge s_{\czero(s)+2}\}\,\mbox{ and }\,
	\P_{3,2}:=\{s\in\P_3: \ealz(s)<s_{\czero(s)+2}\}.
	\end{align*}
	By (\ref{E:xi1}), we have $\xi_1:\P_{3,1}\cap\I_n\rightarrow \{(s,i):s\in\I^*\cap\I_n,\, i\in s \mbox{ and }\,\ealz(s)\le i<\czero(s)\}$. Observe that 
	\begin{align}\label{E:setdiv}
	\nonumber&\quad\{(s,i):s\in\I^*\cap\I_n, i\in s \mbox{ and }\,\ealz(s)\le i< \czero(s)\}\\
	\nonumber&=\{(s,i):s\in\I^*\cap\I_n, i\in s \mbox{ and }\,\ealz(s)<i< \czero(s)\}\\
	\nonumber&\quad\quad\dot\cup\{(s,\ealz(s)):s\in\I^*\cap\I_n\,\mbox{ and }\,\ealz(s)\ne \czero(s)\}\\
	&=\{(s,i):s\in\CMcal{S}_2(i)\cap\I_n,\ealz(s)\le i< \czero(s)\}\\
	\nonumber&\quad\quad\dot\cup\{(s,\ealz(s)):s\in \CMcal{S}_1(i)\cap\I_n\},
	\end{align}
    we are going to do the substitutions, respectively, in the generating functions for the two subsets in (\ref{E:setdiv}),
    so as to deduce the generating function for the set $\P_{3,1}$. That is,   
    substitute (assume that $\czero(s)=p$ and $\ealz(s)=j$)
	\begin{align*}
	\quad w^{p-j}\rightarrow w^{p-1-j}+\cdots+w^{1}+qw^{0}&=\frac{1-w^{p-j-1}}{1-w}+(q-1)\quad \text{for  $1\le j\le p-1$}\\
	&=\frac{q-qw+w}{1-w}-\frac{w^{p-j}}{1-w},
	\end{align*}
	and $w^0\rightarrow 0$ in $z^{-1}\times (\ref{E:genS2})$. 
	This yields the generating function for sequences $s$ in $\P_{3,1}$ whose image $\xi_1(s)$ belongs to the first set of (\ref{E:setdiv}). Furthermore, dividing (\ref{E:genS1}) by $zw$ gives the generating function for sequences $s$ in $\P_{3,1}$ whose image $\xi_1(s)$ belongs to the second set of (\ref{E:setdiv}). Summing these two parts equals
	\begin{align}\label{E:p31}
	\nonumber&\quad\sum_{s\in\P_{3,1}}t^{\vert s\vert}x^{\rep(s)}q^{\fmax(s)}w^{\ealm(s)}u^{\asc(s)}z^{\zero(s)}\\
	\nonumber&=\frac{1-zt}{z}\left(q+\frac{w}{1-w}\right)\CMcal{P}(t;u,x,z,1,v,q)-\frac{1-zt}{z(1-w)}\CMcal{P}(t;u,x,z,w,v,q)\\
	&\qquad-(q-1)(t-z^{-1})\CMcal{P}(t;u,x,z,0,v,q).
	\end{align}
	We now turn to establish the generating function for the set $\P_{3,2}$. For a given $i\ge 2$, let 
	\begin{align*}
	\CMcal{T}_{1}(i):&=\{s\in \I^*: i\not \in s\,\mbox{ and }\, 2\le i<\ealz(s)\},\\
	\CMcal{T}_2(i):&=\{s\in\I^*:\ealz(s)\ne 1\}-\CMcal{T}_1(i)\\
	&=\{s\in\I^*:\ealz(s)=2\}
	\dot\cup \{s\in\I^*: i\in s\,\mbox{ and }\,2\le i<\ealz(s)\}.
	\end{align*}
	Then by (\ref{E:xi11}), we have 
	\begin{align}\label{E:xi3}
	\xi_i:\T_1(i)\cap\I_n\rightarrow \{s\in\I^*\cap\I_n: 2\le i\le\ealz(s)\}
	\end{align}
	such that the values of statistics $\asc,\dist,\rmin,\max,(\czero-\ealz)$ stay the same, while $\czero$ is decreased by one. Only when $\ealz(s)=1$, there is no $i$ satisfying (\ref{E:xi3}) and therefore no such bijection $\xi_i$, implying that the generating function for the image set of (\ref{E:xi3}) is $\CMcal{P}(t;u,x,z,w,v,q)-\gamma_1(t;u,x,z,w,v,q)$, where $\gamma_1$ counts the sequences $s$ with $\mathsf{ealz}(s)=1$.	
	As a result of $\xi_i$, the generating function for the subset $\T_1(i)$ is
	\begin{align*}
	zt\,(\CMcal{P}(t;u,x,z,w,v,q)-\gamma_1(t;u,x,z,w,v,q)).
	\end{align*}	
	In other words, the generating function for the set $\T_2(i)$ is
	\begin{align*}
	(1-zt)\,(\CMcal{P}(t;u,x,z,w,v,q)-\gamma_1(t;u,x,z,w,v,q)).
	\end{align*} 
	Using (\ref{E:xi1}), we find that $\xi_1:\P_{3,2}\cap\I_n\rightarrow \{(s,i):s\in \I^*\cap\I_n,i\in s\mbox{ and } 1\le i<\ealz(s)\}$. Since the image set equals
	\begin{align}\label{E:setdiv2}
	\nonumber&\quad\{(s,i):s\in \I^*\cap\I_n,\, i\in s\mbox{ and } 1\le i<\ealz(s)\}\\
	\nonumber&=\{(s,i):s\in\I^*\cap\I_n: i\in s\,\mbox{ and }\,2\le i<\ealz(s)\}
	\dot\cup \{(s,1):s\in\I^*\,\mbox{ and }\,\ealz(s)\ge 2\}\\
	&=\{(s,i):s\in\T_2(i): i\in s\,\mbox{ and }\,1\le i<\ealz(s)\}
	\dot\cup \{(s,1):s\in\T_1(i)\},
	\end{align}
	we do the substitutions below, respectively, in the generating functions for the two sets in (\ref{E:setdiv2}) in order to establish the generating function for the set $\P_{3,2}$. Replace (assume that $\czero(s)=p$ and $\ealz(s)=j$)
	\begin{align*}
	w^{p-j}&\rightarrow w^{p-j}+w^{p-j+1}+\cdots+w^{p-2}=\frac{w^{p-j}-w^{p-1}}{1-w} \quad\mbox{ for } j<p,\\
	w^{0}&\rightarrow qw^0+w^{1}+\cdots+w^{p-2}=\frac{1-w^{p-1}}{1-w}+q-1 \quad\mbox{ for } j=p
	\end{align*}
	in the generating function $z^{-1}u(1-zt)\,(\CMcal{P}(t;u,x,z,w,v,q)-\gamma_1(t;u,x,z,w,v,q))$; substitute $w^{p-j}\rightarrow w^{p-2}$ in $ut(\CMcal{P}(t;u,x,z,w,v,q)-\gamma_1(t;u,x,z,w,v,q))$. This yields that 
	\begin{align}\label{E:p32}
	\nonumber&\quad\sum_{s\in\P_{3,2}}t^{\vert s\vert}x^{\rep(s)}q^{\fmax(s)}w^{\ealm(s)}u^{\asc(s)}z^{\zero(s)}\\
	\nonumber&=\frac{u(1-zt)}{z(1-w)}(\CMcal{P}(t;u,x,z,w,v,q)-\gamma_1(t;u,x,z,w,v,q))\\
	\nonumber&\qquad-\frac{u(1-zt)}{zw(1-w)}(\CMcal{P}(t;u,x,zw,1,v,q)-\gamma_1(t;u,x,zw,1,v,q))\\
	\nonumber&\qquad+\frac{u(1-zt)}{z}(q-1)(\CMcal{P}(t;u,x,z,0,v,q)-z\alpha_1(t;u,x,q,v))\\
	\nonumber&=\frac{u(1-zt)}{z(1-w)}\CMcal{P}(t;u,x,z,w,v,q)-\frac{u(1-zt)}{zw(1-w)}\CMcal{P}(t;u,x,zw,1,v,q)\\
	&\qquad+\frac{u(1-zt)}{z}(q-1)\CMcal{P}(t;u,x,z,0,v,q)-u(1-zt)(q-1)\alpha_1(t;u,x,q,v).
	\end{align}
	 The second equation holds because by definition $\gamma_1(t;u,x,z,0,v,q)=z\alpha_1(t;u,x,q,v)$, and  $\gamma_1(t;u,x,z,w,v,q)=w^{-1}\gamma_1(t;u,x,zw,1,v,q)$. Summing (\ref{E:p31}) and (\ref{E:p32}), we conclude (\ref{eqL:case3}), which finishes the proof.
\end{proof}
Using Lemma \ref{L:p1}--\ref{Lzero:case3} and the kernel method, we are now able to prove Theorem \ref{T:1}.

{\em Proof of Theorem \ref{T:1}}. Since the sum of the generating functions for the sets $\P_i$, $1\le i\le 3$, equals $\P(t;u,x,z,w,v,q)$, we have, with $\kappa:=tx-t+z^{-1}$, that
\begin{align}
\nonumber&\quad\left(1-\frac{\kappa(u-1)}{1-w}\right)\CMcal{P}(t;u,x,z,w,v,q)\\
\nonumber&=\frac{zqxuv^2t^2(q-zwt(q-1))}{1-zwt}
+\kappa\left(q-1+\frac{1}{1-w}\right)\CMcal{P}(t;u,x,z,1,v,q)\\
\nonumber&\qquad+\frac{tuxv(1-w)-u\kappa(1-zwt)}{w(1-zwt)(1-w)}
\CMcal{P}(t;u,x,zw,1,v,q)\\
\nonumber&\qquad+(q-1)(u-1)\kappa\CMcal{P}(t;u,x,z,0,v,q)\\
\label{E:funp1}&\qquad+uz(q-1)(txv-\kappa)\alpha_1(t;u,x,q,v).
\end{align}
Setting $w=0$ on both sides produces
\begin{align}
\nonumber\left(1-q^{-1}\right)\CMcal{P}(t;u,x,z,0,v,q)&=zq(q-1)xuv^2t^2+\kappa(q-1)\CMcal{P}(t;u,x,z,1,v,q)\\
\nonumber&\qquad+(q-1)\kappa(u-1)\CMcal{P}(t;u,x,z,0,v,q)\\
\label{E:funcp2}&\qquad+uz(q-1)(txv-\kappa)\alpha_1(t;u,x,q,v).
\end{align}
Now (\ref{E:funp1}) minus (\ref{E:funcp2}) is equal to
\begin{align*}
\left(1-\frac{\kappa(u-1)}{1-w}\right)\CMcal{P}(t;u,x,z,w,v,q)&=\frac{zqxuv^2t^2}{1-zwt}+\frac{\kappa}{1-w}\CMcal{P}(t;u,x,z,1,v,q)\\
&\qquad+\left(1-q^{-1}\right)\CMcal{P}(t;u,x,z,0,v,q).\\
&\qquad+\frac{tuxv(1-w)-u\kappa(1-zwt)}{w(1-zwt)(1-w)}
\CMcal{P}(t;u,x,zw,1,v,q).
\end{align*}
Choose $w=1-\kappa(u-1)$, so that the LHS of the above equation vanishes. This is the starting point of the kernel method. In consequence, the above equation can be further simplied into 
\begin{align*}
\CMcal{P}(t;u,x,zw,1,v,q)&=\frac{zqxv^2t^2w(u-1)}{1-zwt-txv(u-1)}
+\frac{u^{-1}w(1-zwt)}{1-zwt-txv(u-1)}\CMcal{P}(t;u,x,z,1,v,q)\\
&\qquad+\frac{(1-q^{-1})u^{-1}w(1-zwt)(u-1)}{1-zwt-txv(u-1)}\CMcal{P}(t;u,x,z,0,v,q).
\end{align*}
Replacing $\CMcal{P}(t;u,x,z,0,v,q)$ by the following expression found from (\ref{E:funcp2}),
\begin{align*}
\CMcal{P}(t;u,x,z,0,v,q)&=\frac{zq^2xuv^2t^2}{1-q\kappa(u-1)}
+\frac{q\kappa}{1-q\kappa(u-1)}\CMcal{P}(t;u,x,z,1,v,q)\\
&\qquad+\frac{uqz(txv-\kappa)}{1-q\kappa(u-1)}\alpha_1(t;u,x,q,v),
\end{align*}
we are led to
\begin{align}
\nonumber\frac{1}{z}\CMcal{P}(t;u,x,z,1,v,q)
&=\frac{uqxv^2t^2(1-u)}{1-zwt}\left(q+(1-q)zt\right)\\
\nonumber&\qquad+\frac{u(1-zwt-txv(u-1))(1-q+qw)}{w(1-zwt)}\frac{1}{zw}\CMcal{P}(t;u,x,zw,1,v,q)\\
\label{E:ker0}&\qquad+\frac{u(q-1)}{w}\left(1-w-txv(u-1)\right)\alpha_1(t;u,x,v,q),
\end{align}
Define $\gamma=1-t(x-1)(u-1)$ and $t(x-1)\sigma_k=(zt(x-1)+1)\gamma^k-1$. In particular, 
$\sigma_0=z$ and $\sigma_1=zw=z-z\kappa(u-1)$. We next iterate (\ref{E:ker0}) (the second part of the kernel method), obtaining that 
\begin{align*}
\frac{1}{z}\CMcal{P}(t;u,x,z,1,v,q)&=uqxv^2t^2(1-u)
\sum_{n=1}^{m}\frac{\left(q+(1-q)\sigma_{n-1}t\right)}{1-\sigma_nt}\CMcal{Q}_{n-1}(z)\\
&\qquad+u(q-1)\alpha_1(t;u,x,v,q)\sum_{n=1}^{m}\frac{\CMcal{Q}_{n-1}(z)}{\sigma_n}\left((1-txv(u-1))\sigma_{n-1}-\sigma_n\right)\\
&\qquad+\frac{\CMcal{Q}_m(z)}{\sigma_m}\P(t,u,x,\sigma_m,1,v,q),
\end{align*}
where $\CMcal{Q}_{n}(z)$ is a finite product:
\begin{align*}
\CMcal{Q}_{n}(z):=u^n\prod_{i=1}^{n}\frac{(1-\sigma_i t-txv(u-1))(\sigma_{i-1}(1-q)+q\sigma_i)}{\sigma_i(1-\sigma_i t)}.
\end{align*}
Letting $m\rightarrow\infty$, then
\begin{align*}
\frac{1}{z}\CMcal{P}(t;u,x,z,1,v,q)&=uqxv^2t^2(1-u)
\sum_{n=1}^{\infty}\frac{\left(q+(1-q)\sigma_{n-1}t\right)}{1-\sigma_nt}\CMcal{Q}_{n-1}(z)\\
&\qquad+u(q-1)\alpha_1(t;u,x,v,q)\sum_{n=1}^{\infty}\frac{\CMcal{Q}_{n-1}(z)}{\sigma_n}\left((1-txv(u-1))\sigma_{n-1}-\sigma_n\right).
\end{align*}
Take $z=0$ on both sides, we obtain
\begin{align*}
\alpha_1(t;u,x,v,q)&=uqxv^2t^2(1-u)
\sum_{n=1}^{\infty}\frac{qx-1+(1-q)\gamma^{n-1}}{x-\gamma^n}\CMcal{Q}_{n-1}(0)\\
&\quad+u(q-1)\alpha_1(t;u,x,v,q)\sum_{n=1}^{\infty}
\frac{(1-txv(u-1))(\gamma^{n-1}-1)-\gamma^n+1}{\gamma^n-1}\CMcal{Q}_{n-1}(0).
\end{align*}
It follows by the relation $\CMcal{G}(t(u-1)(x-1);x,u,v,q,1)=(qtuxv)^{-1}\alpha_1(t;u,x,v,q)$ that 
\begin{align*}
&\quad\CMcal{G}(t(u-1)(x-1);x,u,v,q,1)\\
&=vt(1-u)
\sum_{n=1}^{\infty}\frac{qx-1+(1-q)\gamma^{n-1}}{x-\gamma^n}\CMcal{Q}_{n-1}(0)\\
&\qquad\times\bigg(1-u(q-1)\sum_{n=1}^{\infty}
\frac{(1-txv(u-1))(\gamma^{n-1}-1)-\gamma^n+1}{\gamma^n-1}\CMcal{Q}_{n-1}(0)\bigg)^{-1}.
\end{align*}
This is equivalent to (\ref{E:thm1}). It remains to establish the (bi)-symmetric property of $\CMcal{G}(t;x,u,v,q,1)$, which is easily seen in the context of permutations. For any $\pi=\pi_1\pi_2\ldots\pi_n\in \S_n$,  
\begin{align*}
(\des,\ides,\lmin,\rmax)\pi&=(\des,\ides,\rmax,\lmin)(\pi^c)^r,\\
(\des,\ides,\lmin,\rmax)\pi&=(\ides,\des,\lmin,\rmax)\pi^{-1}
\end{align*}
where $\pi^c=(n+1-\pi_1)(n+1-\pi_2)\ldots(n+1-\pi_n)$ and $\pi^r=\pi_n\cdots\pi_2\pi_1$ are complement and reverse of $\pi$, respectively. Using the bijection $\Phi$ from (\ref{E:bv}), it is not hard to verify that 
\begin{align*}
(\asc,\dist,\max,\rmin)\Phi(\pi)
&=(\asc,\dist,\rmin,\max)\Phi((\pi^c)^r),\\
(\asc,\dist,\max,\rmin)\Phi(\pi)&=(\dist,\asc,\max,\rmin)\Phi(\pi^{-1}).
\end{align*}
This means $\CMcal{G}(t;x,u,v,q,1)=\CMcal{G}(t;x,u,q,v,1)=\CMcal{G}(t;u,x,q,v,1)$, which completes the proof of Theorem \ref{T:1}. \qed

We next prove Corollary \ref{C:1} via the formula of $\CMcal{G}(t;x,u,v,q,1)$ in (\ref{E:thm1}) and transformations on formal power series.

{\em Proof of Corollary \ref{C:1}}. 
Take $v=1$ in (\ref{E:adr2}), we obtain, with $\gamma=1-t$,
\begin{align*}
\frac{\CMcal{G}(t;x,u,1,1,1)}{(1-x)(1-u)}&=\sum_{n=1}^{\infty}\frac{tr^{n-1}(u-1)^{-1}}
{u^n(1-xr^{n-1})(1-xr^{n})}\\
&=\sum_{n=1}^{\infty}\left(\frac{1}{1-xr^n}-\frac{1}{1-xr^{n-1}}\right)\frac{(1-u)^{-1}}{u^nx}.
\end{align*} 
Expand $(1-xr^n)^{-1}$ and then interchange two sums, the above RHS becomes
\begin{align}\label{E:equi1}
\frac{1}{1-u}\sum_{n=1}^{\infty}\frac{1}{xu^n}\left(\frac{1}{1-xr^n}-\frac{1}{1-xr^{n-1}}\right)
&=\sum_{m=1}^{\infty}x^{m-1}\frac{r^m-1}{(u-r^m)(1-u)}\\
\nonumber&=\sum_{m=1}^{\infty}x^{m-1}\left(\frac{1}{u-1}-\frac{1}{u-r^m}\right)\\
\nonumber&=\frac{1}{(u-1)(1-x)}-\frac{1}{x}\sum_{m=1}^{\infty}\frac{x^m}{u-r^m},
\end{align}
where the last sum equals
\begin{align*}
-\frac{1}{x}\sum_{m=1}^{\infty}\frac{x^m}{u-r^m}&=\frac{1}{ux}\sum_{m=1}^{\infty}x^{m}\frac{u}{r^m-u}=\frac{1}{ux}\sum_{m=1}^{\infty}x^m\sum_{k=1}^{\infty}\frac{u^k}{r^{km}},
\end{align*}
by which Corollary \ref{C:1} follows. In a similar way one can show that the formula of $\CMcal{G}(t;x,u,1,1,1)$ in (\ref{E:adr1}) also equals (\ref{E:equi1}), i.e., (\ref{E:adr1}) and (\ref{E:adr2}) when $v=1$ are equivalent to the RHS of (\ref{E:gg1}). This finishes the proof.
\qed

\section{Proof of Theorem \ref{T:adr} and \ref{T:4}}\label{S:4}
Our proofs of Theorem \ref{T:adr} and \ref{T:4} are centered around a decomposition of the dual set of $\I^*$, namely, a set of inversion sequences with all maximal entries appearing at the beginning. 
The dual set of $\I^*$ is in simple bijection with $\I^*$, i.e., $s\mapsto s^c$ 
(where $s^c$ is the complement of $s$) that interchanges the number of maximal entries and zeros. However, this bijection carries no information on the number of distinct entries ($\dist$), so the first two decompositions of inversion sequences are not equivalent regarding the distribution of the Eulerian statistics. 

\begin{definition}For a sequence $s=(s_1,\ldots,s_n)\in\I_n$, let $\cmax(s)$ be the length of the initial strictly increasing subsequence of $s$. Let $\cmax(s)=p<n$, the {\bf e}ntry {\bf a}fter the {\bf l}ast {\bf m}aximal of $s$ is denoted by $\ealm(s)$, that is, $\ealm(s)=s_{p+1}$. For example, if $s=(0,1,2,3,2,4)$, then $\ealm(s)=2$. For convenience, we set $\ealm(0,1,\ldots,n-1)=0$.
\end{definition}
\begin{remark}
The statistic $\ealm$ was introduced in \cite{fjlyz} and it turns out to be quite useful in the refined enumerations of ascent sequences, a subset of inversion sequences (see \cite{js}).
\end{remark}
Define
\begin{align*}
\CMcal{I}^{\bullet}=\{s\in \CMcal{I}:\cmax(s)=\max(s)\,\mbox{ and }\,\zero(s)\ge 2\},
\end{align*}
Let $\CMcal{M}(t;x,q,w,u,v,z)$ be the generating function for the set $\I^\bullet$, i.e., 
\begin{align*}
\CMcal{M}(t;x,q,w,u,v,z):&=\sum_{s\in\I^\bullet}t^{|s|}
u^{\asc(s)}x^{\dist(s)}q^{\cmax(s)}w^{\ealm(s)}
v^{\rmin(s)}z^{\mathsf{zero}(s)},
\end{align*}
we will establish a functional equation for $\M(t;x,q,w,u,z,v)$ by dividing the set $\CMcal{I}^\bullet$ into the following subsets:
\begin{align*}
\CMcal{M}_{1}&:=\{s\in\CMcal{I}^\bullet: |s|=\cmax(s)+1\},\\
\CMcal{M}_{2}&:=\CMcal{I}^\bullet-\CMcal{M}_1.
\end{align*}
In the follow-up lemmas, we provide the generating function for the subsets $\M_1,\M_2$ separately. Let $\beta_1(t;x,u,v,z):=[q]\CMcal{M}(t;x,q,w,u,v,z)$ be the generating function of sequences $s\in\I^\bullet$ with only one maximal entry.
\begin{lemma}\label{L:M1}
	The generating funcion for the set $\M_1$ is
	\begin{align*}
	&\quad\sum_{s\in\CMcal{M}_1}t^{\vert s\vert}u^{\asc(s)}x^{\dist(s)}q^{\cmax(s)}z^{\zero(s)}w^{\ealm(s)}v^{\rmin(s)}=\frac{qz^2vt^2}{1-qxtu}.
	\end{align*}
\end{lemma}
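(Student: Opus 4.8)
The set $\CMcal{M}_1$ consists of inversion sequences $s\in\CMcal{I}^\bullet$ with $|s|=\cmax(s)+1$. The plan is to directly enumerate these sequences: the condition $|s|=\cmax(s)+1$ says that $s$ has exactly one entry after its initial strictly increasing block. Since $s\in\CMcal{I}^\bullet$ forces $\cmax(s)=\max(s)$ and $\zero(s)\ge 2$, the initial strictly increasing part must be a maximal run $(0,1,2,\ldots,p-1)$ where $p=\cmax(s)$, and there is exactly one further entry $s_p$ with $0\le s_p\le p-1$.

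\textbf{Key steps.} First I would argue that every $s\in\CMcal{M}_1$ has the explicit form $s=(0,1,\ldots,p-1,j)$ for some $p\ge 1$ and some $j$ with $0\le j\le p-1$. The requirement $\zero(s)\ge 2$ immediately forces $j=0$, since the initial block already contributes exactly one zero and the only remaining position is $s_p$; thus in fact $s=(0,1,\ldots,p-1,0)$, and necessarily $p\ge 1$. (When $p=1$ the sequence is $(0,0)$.) Next I would compute each statistic on this canonical form: we have $|s|=p+1$, $\cmax(s)=p$, $\max(s)=p$, $\zero(s)=2$, the final entry $\ealm(s)=s_{p+1}=0$ so $\ealm(s)=0$, $\asc(s)=p-1$ (the $p-1$ ascents inside the block, with a descent at the end), $\dist(s)=p-1$ (distinct nonzero values $1,\ldots,p-1$), and $\rmin(s)=1$ (only the trailing $0$ is a right-to-left minimum). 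Substituting these into the monomial $t^{|s|}u^{\asc}x^{\dist}q^{\cmax}z^{\zero}w^{\ealm}v^{\rmin}$ gives $t^{p+1}u^{p-1}x^{p-1}q^{p}z^{2}w^{0}v$.

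\textbf{Summation.} Summing over $p\ge 1$ then yields
\begin{align*}
\sum_{s\in\CMcal{M}_1}t^{|s|}u^{\asc(s)}x^{\dist(s)}q^{\cmax(s)}z^{\zero(s)}w^{\ealm(s)}v^{\rmin(s)}
=\sum_{p\ge 1}t^{p+1}u^{p-1}x^{p-1}q^{p}z^{2}v
=\frac{qz^{2}vt^{2}}{1-qxtu},
\end{align*}
where the last equality is the geometric series $\sum_{p\ge 1}(qxtu)^{p-1}=(1-qxtu)^{-1}$ after factoring out $qz^2vt^2$. This matches the claimed formula.

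\textbf{Main obstacle.} There is no genuine analytic difficulty here; the lemma is a base case of the decomposition and the summation is elementary. The one point requiring care is the bookkeeping of statistics on the canonical form, especially confirming that $\zero(s)\ge 2$ forces the trailing entry to be $0$ (so that $\ealm(s)=0$ contributes $w^0=1$ and $\rmin(s)=1$ contributes a single factor of $v$), rather than allowing an arbitrary $0\le j\le p-1$. I would double-check the edge case $p=1$, i.e. $s=(0,0)$, to ensure it is correctly counted with $\asc=\dist=0$ and is consistent with the $p=1$ term $qz^2vt^2$ of the series.
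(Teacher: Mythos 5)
Your proposal is correct and follows essentially the same route as the paper: identify each $s\in\CMcal{M}_1$ as $(0,1,\ldots,p-1,0)$ (with $\zero(s)\ge 2$ forcing the trailing entry to be $0$), read off the statistics, and sum the resulting geometric series $\sum_{p\ge 1}z^2v\,u^{p-1}x^{p-1}q^pt^{p+1}$. Your write-up is just a more detailed version of the paper's one-line computation.
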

\begin{proof}
	Every $s\in\M_1$ with $\cmax(s)=p$ has a form $s=(0,1,\ldots,p-1,0)$. Consequently, the LHS of the above equation equals
	\begin{align*}
	&\sum_{p\ge 1}z^2v\,u^{p-1}x^{p-1}q^pt^{p+1}=\frac{qz^2vt^2}{1-qxtu}.
	\end{align*}
\end{proof}
\begin{lemma}\label{L:M2}
	The generating function for $\CMcal{M}_2$ is
	\begin{align*}
	&u\left(z+\frac{w}{1-w}\right)\rho\CMcal{M}(t;x,q,1,u,v,z)+\frac{\rho(1-u)}{1-w}\CMcal{M}(t;x,q,w,u,v,z)\\
	&\quad +\rho(z-1)(1-u)\CMcal{M}(t;x,q,0,u,v,z)
	-\frac{t-tx+(qwu)^{-1}}{1-w}\CMcal{M}(t;x,qw,1,u,v,z)\\
	&\quad \qquad-\frac{z-1}{u}\beta_1(t;x,u,v,z)
	+\frac{z^2qu^2vtx\rho}{1-qtux}\CMcal{M}(t;x,q,1,u,v,1),
	\end{align*}
	where $\rho=t-tx+(qu)^{-1}$.
\end{lemma}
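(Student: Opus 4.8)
The plan is to establish Lemma~\ref{L:M2} by mirroring, on the dual set $\I^\bullet$, the three-way case analysis used for the set $\P_2$ in the proof of Lemma~\ref{Lzero:case2}. Every $s\in\M_2$ has $\cmax(s)=\max(s)=p\ge 1$ and therefore the canonical form
\[
s=(0,1,\ldots,p-1,\ealm(s),s_{p+2},\ldots,s_n),\qquad n\ge p+2,
\]
where $\ealm(s)=s_{p+1}\in\{0,1,\ldots,p-1\}$ (it cannot equal $p$, for otherwise a maximal entry would sit outside the initial run), and the tail carries the second zero guaranteed by $\zero(s)\ge 2$. First I would split $\M_2$ into subsets $\M_{2,1},\M_{2,2},\M_{2,3}$ according to the relative order of $\ealm(s)$ and $s_{p+2}$ --- that is, by whether the junction after the initial run is a descent, an ascent, or forces $\ealm(s)$ into its smallest admissible value --- exactly as $\P_2$ was cut into $\P_{2,1},\P_{2,2},\P_{2,3}$. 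This is the partition that lets one control the change of $\asc$ entry by entry while the leading increasing run is kept intact.

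On each piece I would run the bijection dual to $f_1$: delete the entry $\ealm(s)=s_{p+1}$ and decrement by one every entry exceeding it. One checks that the image lies in $\I^\bullet\cap\I_{n-1}$ (no new maximal is created at position $p+1$ since the relabelled $s_{p+2}$ is at most $p-1$, and the run length stays $p$), that $|s|$ and $\dist(s)$ each drop by one, and that $\asc,\rmin$ change by a controlled amount depending on which subset $s$ belongs to. Translating the inverse of this map into a substitution on $\M(t;x,q,w,u,v,z)$ turns the exponent $w^{\,p-\ealm(s)}$ into a geometric sum in $w$ that collapses to the closed forms $z+\tfrac{w}{1-w}$, $\tfrac{1}{1-w}$, and so on; this produces the first three terms of the claimed formula, with the prefactor $u\rho$ playing here the role that $tx$ played in Lemma~\ref{Lzero:case2} and with the $q\leftrightarrow z$ swap reflecting the complementation $\zero\leftrightarrow\cmax$. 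The term in $\M(t;x,qw,1,u,v,z)$ arises, dually to $f_2$/$f_3$, from the case in which the junction is an ascent so that the boundary of the initial run is shifted --- encoded by the specialization $q\mapsto qw$, which is exactly why its coefficient is $\rho$ evaluated at $q\mapsto qw$ --- while the geometric series over the length of a freshly prepended increasing run yields the factor $(1-qtux)^{-1}$ in the last term, paralleling the $\sum_k(zwt)^k$ sum of (\ref{E:2part3}) and the $(1-qxtu)^{-1}$ of Lemma~\ref{L:M1}. Finally, the boundary case in which the deletion leaves a single maximal entry contributes the $\beta_1(t;x,u,v,z)$ term, the dual of the $\alpha_1$ term in Lemma~\ref{Lzero:case2}.

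The delicate point --- and the one I expect to cost the most bookkeeping --- is that $\dist$ is not a local statistic, so one must verify that deleting $\ealm(s)$ genuinely removes a distinct value and lands the image inside $\I^\bullet$ rather than merely inside $\I$. Two degeneracies must be isolated: when $\ealm(s)=0$ the deletion removes a zero, which can violate $\zero(s)\ge 2$ and so must be compensated by the $(z-1)$-weighted specialization $\M(t;x,q,0,u,v,z)$ and the $z^2$-weighted, $z=1$ specialization $\M(t;x,q,1,u,v,1)$; and when $p=1$ (a single leading maximal) the image may leave $\I^\bullet$ altogether, which is precisely what the $\beta_1$ correction records. Keeping these two degeneracies separate from the generic case, and checking that the $w$-substitutions assemble into the exact rational prefactors in the statement, is the crux; once the substitutions are verified term by term, summing the contributions of $\M_{2,1},\M_{2,2},\M_{2,3}$ yields the formula of Lemma~\ref{L:M2}.
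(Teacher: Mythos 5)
Your core bijection does not work. You propose to delete $\ealm(s)=s_{p+1}$ from $s=(0,1,\ldots,p-1,\ealm(s),s_{p+2},\ldots,s_n)$ and ``decrement by one every entry exceeding it.'' But the initial increasing run itself contains the entries $\ealm(s)+1,\ldots,p-1$, all of which exceed $\ealm(s)$; decrementing them collapses the run (you get $0,1,\ldots,i,i,i+1,\ldots,p-2$), destroys the maximal entries, and throws the image out of $\I^\bullet$. If instead you do not decrement, then $\dist$ need not drop by one, since $\ealm(s)$ may be repeated elsewhere in $s$ or may equal $0$; your proposal only isolates the $\ealm(s)=0$ degeneracy and silently assumes the deleted value is distinct. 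This is precisely the obstruction the paper flags when it says the complementation $s\mapsto s^c$ between $\I^*$ and $\I^\bullet$ ``carries no information on the number of distinct entries,'' so the proof of Lemma~\ref{Lzero:case2} cannot simply be dualized by a deletion at position $p+1$. A structural symptom that your route cannot produce the stated formula: a deletion-based recursion yields prefactors proportional to $t$ (as the $tx$ in Lemma~\ref{Lzero:case2} does), whereas the coefficients here involve $\rho=t-tx+(qu)^{-1}$, whose $(qu)^{-1}$ term signals a recursion on the number of maximal entries, not on the length.

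The paper's actual argument keeps the length fixed: the bijection $h$ \emph{replaces} $\ealm(s)$ by the new maximal value $p$, sending $s$ to $s^*=(0,1,\ldots,p-1,p,j,\ldots)$ with $\max(s^*)=\max(s)+1$, and the two-way partition of $\M_2$ is by whether $\cmax(s)$ still appears in $s$ (this, not the junction comparison, is what controls $\dist$); the image lands in $\R$ or $\R^c$ accordingly, where $\R$ is the set of $s\in\I^\bullet$ whose last maximal appears only once, with generating functions $qxtu\,\M$ and $(1-qxtu)\M$ obtained from a separate bijection $g$ that removes a once-occurring last maximal. The comparison $i<j$ versus $i\ge j$ then only governs $\asc$, and the single-zero case $\zero(s^*)=1$ is handled by the prepended-run geometric series that produces the $(1-qtux)^{-1}$ factor. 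To salvage your write-up you would need to abandon the deletion map entirely and rebuild the argument around a length-preserving, $\max$-increasing operation together with the $\R/\R^c$ dichotomy for tracking $\dist$.
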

\begin{proof}
	Let $\CMcal{R}$ be a subset of $\I^\bullet$ such that the last maximal entry appears only once, and let $\CMcal{R}^c=\I^\bullet\,\backslash\CMcal{R}$ be its complement. Apply a bijection $g$ on the set $\R$, i.e., $g:\R\cap \I_n\rightarrow \I^\bullet\cap\I_{n-1}$ where $g(s)$ is obtained from $s$ by removing the last maximal entry and reduce each entry $y$ by one if $y\ge \max(s)$. Under the bijection $g$, the length, the number of maximal entries and ascents are all decreased by one. This implies that the generating function for the set $\R$ is 
	\begin{align}\label{E:r1}
	qxtu\M(t;x,q,w,u,v,z),
	\end{align}
	or equivalently, the one for the complement $\R^c$ is
	\begin{align}\label{E:r2}
	(1-qxtu)\CMcal{M}(t;x,q,w,u,v,z).
	\end{align}
	We divide the set $\M_2$ into two subsets:
	\begin{align*}
	\M_{2,1}&:=\{s\in\M_2: \cmax(s)\,\mbox{ appears in } s\};\\
	\M_{2,2}&:=\M_2-\M_{2,1},
	\end{align*} 
    and will build up a connection between $\M_{2,1}\cap\I_n$ and $\R^c\cap\I_n$. The one between $\M_{2,2}\cap\I_n$ and $\R\cap\I_n$ follows in the same way. Every sequence $s\in \CMcal{I}_n\cap\CMcal{M}_{2,1}$ with $\cmax(s)=p$ and $\ealm(s)=i$ has the form
	$$
	s=(0,1,2,\ldots,p-1,i,j,s_{p+3},\ldots, s_{n})
	$$
	where $0\le i\le p-1,0\le j\le p$ and the subsequence $(j,s_{p+3},\ldots,s_n)$ contains $p$.
	Define  $h(s)=(s^*,i)$, where
	$$s^*:=(0,1,2,\ldots,p-1,p,j,s_{p+3},\ldots, s_{n})\in \I_n\cap\CMcal{R}^c.
	$$ It is easy to verify that $h$ is bijection and it holds also when $\M_{2,1}$ is replaced by $\M_{2,2}$ and $\R^c$ by $\R$. The inverse map $h^{-1}$ enables us to find out the generating function for $\M_2$ by substitutions on the generating functions for $\R$ and $\R^c$, respectively. Before we proceed by substitutions, the three cases on the pairs $(s^*,i)$ below need to be discussed in order to describe the change of statistics $\asc$ and $\rmin$.
	\begin{enumerate}
	\item $i<j$ and $\zero(s^*)\ne 1$;
	\item $i\ge j$ and $\zero(s^*)\ne 1$;
	\item $i=0$ and $\zero(s^*)=1$.
	\end{enumerate}
    For all cases, $\max(s^*)=\max(s)+1$. 
    The first two cases differ only by the change of statistic $\asc$, namely, $\asc(s)=\asc(s^*)$ for case $(1)$, while $\asc(s)=\asc(s^*)+1$ in case $(2)$. Interpret case $(1)$ in terms of generating functions, we substitute
    $$
	\quad w^j\rightarrow z+w+\cdots+w^{j-1}=\frac{1-w^j}{1-w}+(z-1)\quad \text{for  $1\le j\le p$}
	$$
	and $w^0\rightarrow 0$, respectively, 	
	in the generating function $q^{-1}(1-qtux)\CMcal{M}(t;x,q,w,u,v,z)$ and 
	$ut\CMcal{M}(t;x,q,w,u,v,z)$, leading to the generating function for the sequences $s$ from $\M_2$ subject to condition $(1)$:
	\begin{align}
	\nonumber&\left(z+\frac{w}{1-w}\right)(tu+q^{-1}-tux)\CMcal{M}(t;x,q,1,u,v,z)-\frac{tu+q^{-1}-tux}{1-w}\CMcal{M}(t;x,q,w,u,v,z)\\
	\label{E:3case1}&\quad -(z-1)(tu+q^{-1}-tux)\CMcal{M}(t;x,q,0,u,v,z).
	\end{align}
	For case $(2)$, we substitute 
	\begin{align*}
	w^j &\rightarrow w^{j}+w^{j+1}+\cdots+w^{p-1}=\frac{w^j-w^p}{1-w}\quad \text{for  $1\le j\le p$}\\
	w^0&\rightarrow z+w+w^2+\cdots+w^{p-1}=\frac{1-w^p}{1-w}+(z-1),
	\end{align*}
	respectively, in $(qu)^{-1}(1-qtux)\CMcal{M}(t;x,q,w,u,v,z)$ and in $t\CMcal{M}(t;x,q,w,u,v,z)$, yielding the generating function for the sequences $s$ from $\M_2$ subject to condition $(2)$:
	\begin{align}
	\nonumber&\frac{t-tx+(qu)^{-1}}{1-w}\CMcal{M}(t;x,q,w,u,v,z)-\frac{t-tx+(qwu)^{-1}}{1-w}\CMcal{M}(t;x,qw,1,u,v,z)\\
	\label{E:3case2}&\quad \qquad+(z-1)(t-tx+(qu)^{-1})\CMcal{M}(t;x,q,0,u,v,z)-\frac{z-1}{u}\beta_1(t;x,u,v,z).
	\end{align}
	It remains to consider the case $(3)$. Every inversion sequence $s$ with smallest entry $k$ ($k\ge 1$) after the last maximal one can be reduced to a sequence from $\I^\bullet$ by a simple transformation below: Let 
	\begin{align*}
	\CMcal{C}_k:=\{s\in\I_n:\min\{s_{m},\cmax(s)+1\le m\le|s|\}=k \,\mbox{ and }\, \cmax(s)=\max(s)\},
	\end{align*}
	then define $g_1:\CMcal{C}_k 
	\rightarrow \{(s,k):s\in\I^\bullet\cap\I_{n-k}\,\mbox{ and }\, k\ge 1\}$ where $g_1(s)$ is obtained by removing the first $k$ entries and replacing each other entry $y$ by $y-k$ in $s$.  This process together with (\ref{E:r1}) implies that the generating function for the set $\CMcal{C}_k\,\cap \R$ is $$z(qutvx)^k(qxtu)\M(t;x,q,1,u,v,1).$$ 
	Similarly, by $g_1$ and (\ref{E:r2}), the generating function for $\CMcal{C}_k\,\cap \R^c$ equals $$z(qutvx)^k(1-qxtu)\M(t;x,q,1,u,v,1).$$
	Under the inverse bijection $h^{-1}$, every pair $(s^*,0)$ with $s^*\in\CMcal{C}_k$ is mapped into a sequence $s$ with exactly two zeros such that $\ealm(s)=0$ and the smallest entry after the second zero is $k$. Furthermore, $\rmin(s)=\rmin(s^*)-(k-1)$,   $\max(s)=\max(s^*)-1$ and $\zero(s)=\zero(s^*)+1$. If $s^*\in \R$, then $\dist(s^*)=\dist(s)-1$; otherwise $\dist(s^*)=\dist(s)$. Henceforth the generating functions of such sequences $s$ equals
\begin{align*}
&\quad\left(\frac{z}{v^{k-1}}\right)\frac{z}{qx}(qutvx)^k(qxtu)\M(t;x,q,1,u,v,1)\\&\quad+\left(\frac{z}{v^{k-1}}\right)\frac{z}{q}(qutvx)^k(1-qxtu)\M(t;x,q,1,u,v,1)\\
&=\left(\frac{z}{v^{k-1}}\right)\frac{z}{q}(qutvx)^k(qtu+1-qxtu)\M(t;x,q,1,u,v,1).
\end{align*}
    Summing over all $k$ for $k\ge 1$, the generating function for sequences $s$ satisfying condition $(3)$ is given by
	\begin{align*}
	&\quad (qtu+1-qxtu)\sum_{k=1}^{\infty}\left(\frac{z}{v^{k-1}}\right)\frac{z}{q}(qutvx)^k
	\M(t;x,q,1,u,v,1)\\
	&=\frac{z^2uvtx(qtu+1-qxtu)}{1-qtux}\CMcal{M}(t;x,q,1,u,v,1).
	\end{align*}
   Combining with (\ref{E:3case1}) and (\ref{E:3case2}) for cases $(1)$ and $(2)$, we obtain the generating funtion in Lemma \ref{L:M2}, which completes the proof.
\end{proof}
We are now ready to prove Theorem \ref{T:adr}.

{\em Proof of Theorem \ref{T:adr}}. Since (\ref{E:adr1}) is a special case of Theorem \ref{T:1}, it suffices to prove (\ref{E:adr2}).
By Lemma \ref{L:M1} and \ref{L:M2}, we find a functional equation of $\CMcal{M}(t;x,q,w,u,z)$:
\begin{align}\label{E:keyM1}
\nonumber\left(1-\frac{\rho(1-u)}{1-w}\right)\CMcal{M}(t;x,q,w,u,v,z)
&=\frac{qz^2vt^2}{1-qtxu}+u\left(z+\frac{w}{1-w}\right)\rho\CMcal{M}(t;x,q,1,u,v,z)\\
\nonumber&\quad +\rho(z-1)(1-u)\CMcal{M}(t;x,q,0,u,v,z)\\
\nonumber&-\frac{t-tx+(qwu)^{-1}}{1-w}\CMcal{M}(t;x,qw,1,u,v,z)\\
&-\frac{z-1}{u}\beta_1(t;x,u,v,z)
+\frac{z^2qu^2vtx\rho}{1-qtux}\CMcal{M}(t;x,q,1,u,v,1).
\end{align}
We set $z=1$, then
\begin{align*}
\left(1-\frac{\rho(1-u)}{1-w}\right)&\CMcal{M}(t;x,q,w,u,v,1)
=\frac{qvt^2}{1-qtxu}\\
&+\left(\frac{1}{1-w}+\frac{qxvut}{1-qxtu}\right)u\rho\CMcal{M}(t;x,q,1,u,v,1)\\
&-\frac{t-tx+(qwu)^{-1}}{1-w}\CMcal{M}(t;x,qw,1,u,v,1).
\end{align*}
We will employ the kernel method to solve this functional equation.
Take $w=1-(1-u)\rho$ so that the LHS becomes zero, we therefore have, with $\CMcal{C}(t;x,q,u,v,z):=u\rho\CMcal{M}(t;x,q,1,u,v,z)$, that
\begin{align*}
\CMcal{C}(t;x,q,u,v,1)&=\frac{qvt^2(w-1)}{1+utx((v-1)q-qwv)}\\
&\qquad\quad+\frac{1-qtux}{u(1+utx((v-1)q-qwv))}\CMcal{C}(t;x,qw,u,v,1).
\end{align*}
Define $ut(1-x)\tau_n=(qut(1-x)+1)\gamma^n-1$ where $\gamma=1-t(u-1)(x-1)$. By iterating the above equation, we conclude that
\begin{align}\label{E:eqC}
\nonumber\CMcal{C}(t;x,q,u,v,1)&=\sum_{n=1}^{\infty}\frac{vt^2(u-1)(qtu(1-x)+1)\gamma^{n-1}}{u^{n}(1-utx\tau_{n-1})}\\
&\qquad\quad\times\prod_{i=1}^{n}\frac{1-utx\tau_{i-1}}{1+utx((v-1)\tau_{i-1}-v\tau_i)}.
\end{align}
Since $\beta_1(t;x,u,v,1)=[q^0]\C(t;x,q,u,v,1)=t\CMcal{G}(t(u-1)(x-1);x,u,v,1,1)$, we see that 
\begin{align*}
&\quad\,\CMcal{G}(t(u-1)(x-1);x,u,v,1,1)\\
&=\sum_{n=1}^{\infty}\frac{vt(u-1)(1-x)\gamma^{n-1}}{u^{n}(1-x\gamma^{n-1})}
\prod_{i=1}^{n}\frac{1-x\gamma^{i-1}}{1+x(v-1)\gamma^{i-1}-xv\gamma^i},
\end{align*}
which is equivalent to (\ref{E:adr2}), thus completing the proof. \qed

Now we return to (\ref{E:keyM1}) and establish Theorem \ref{T:4} using the formula (\ref{E:adr2}).

{\em Proof of Theorem \ref{T:4}}. Setting $w=0$ on both sides of (\ref{E:keyM1}) produces
\begin{align}
\nonumber\left(1-\rho(1-u)\right)\CMcal{M}(t;x,q,0,u,v,z)
&=\frac{qz^2vt^2}{1-qtxu}+uz\rho\CMcal{M}(t;x,q,1,u,v,z)\\
\nonumber&\quad +\rho(z-1)(1-u)\CMcal{M}(t;x,q,0,u,v,z)\\
\label{E:wzero}&-\frac{z}{u}\beta_1(t;x,u,v,z)
+\frac{z^2qu^2vtx\rho}{1-qtux}\CMcal{M}(t;x,q,1,u,v,1).
\end{align}
On the other hand, set $w=1-(1-u)\rho$ on both sides of (\ref{E:keyM1}), we have
\begin{align}
\nonumber\frac{t-tx+(qwu)^{-1}}{1-w}\CMcal{M}(t;x,qw,1,u,v,z)&=\frac{qz^2vt^2}{1-qtxu}+u\left((z-1)\rho+\frac{1}{1-u}\right)\CMcal{M}(t;x,q,1,u,v,z)\\
&\nonumber\quad +\rho(z-1)(1-u)\CMcal{M}(t;x,q,0,u,v,z)\\
&\nonumber\quad-\frac{z-1}{u}\beta_1(t;x,u,v,z)\\
\label{E:wker}&\quad+\frac{z^2qu^2vtx\rho}{1-qtux}\CMcal{M}(t;x,q,1,u,v,1).
\end{align}
The difference of (\ref{E:wzero}) and (\ref{E:wker}) gives
\begin{align*}
\CMcal{M}(t;x,q,0,u,v,z)&=\frac{t-tx+(qwu)^{-1}}{(1-w)w}\CMcal{M}(t;x,qw,1,u,v,z)\\
&\quad+\frac{u}{u-1}\CMcal{M}(t;x,q,1,u,v,z)
-\frac{1}{uw}\beta_1(t;x,u,v,z).
\end{align*}
Plugging the above expression of $\CMcal{M}(t;x,q,0,u,v,z)$ into (\ref{E:wker}), we are led to 
\begin{align*}
\rho\CMcal{M}(t;x,q,1,u,v,z)&=\frac{qz^2vt^2(w-1)}{u(1-qxut)}
+\frac{z^2quvtx(w-1)}{1-qtux}\rho\CMcal{M}(t;x,q,1,u,v,1)\\
&\quad+\frac{(z-1)(1-w)}{u^2w}\beta_1(t;x,u,v,z)\\
&\quad+\frac{(1-z)(1-w)+w}{uw}(t-tx+(qwu)^{-1})\CMcal{M}(t;x,qw,1,u,v,z).
\end{align*}
Repeating the above equation, we deduce that 
\begin{align*}
\CMcal{C}(t;x,q,u,v,z)&=\sum_{n=1}^{\infty}\bigg(\frac{z^2vt^2(\tau_{n}-\tau_{n-1})}{1-xut\tau_{n-1}}
+\frac{z^2uvtx(\tau_n-\tau_{n-1})}{1-xut\tau_{n-1}}\CMcal{C}(t;x,\tau_{n-1},u,v,1)\\
&\quad+\frac{(z-1)(\tau_{n-1}-\tau_n)}{u\,\tau_{n}}\beta_1(t;x,u,v,z)\bigg)\times \prod_{i=1}^{n-1}\frac{(1-z)(\tau_{i-1}-\tau_i)+\tau_{i}}{u\,\tau_{i}}.
\end{align*}
In view of $\beta_1(t;x,u,v,z)=[q^0]\C(t;x,q,u,v,z)=zt\CMcal{G}(t(u-1)(x-1);x,u,v,1,z)$, we take the coefficient of $q^0$ on both sides and obtain that 
\begin{align*}
\CMcal{G}(t(u-1)(x-1);x,u,v,1,z)&=\sum_{n=1}^{\infty}\frac{zv\gamma^{n-1}(u-1)}{u(1-xut\,\varepsilon_{n-1})}(t+ux\,\CMcal{C}(t;x,\varepsilon_{n-1},u,v,1))A_n\\
&\qquad\quad\times\left(1-\sum_{n=1}^{\infty}\frac{(z-1)\gamma^{n-1}(1-u)}{u^2\,\varepsilon_{n}}A_n\right)^{-1},
\end{align*}
where $ut(1-x)\varepsilon_n=\gamma^n-1$ and $A_n$ is a finite product: 
\begin{align*}
A_n=\prod_{i=1}^{n-1}\frac{(1-z)\gamma^{i-1}(1-u)+u\varepsilon_{i}}{u^2\,\varepsilon_{i}}.
\end{align*}
By substitution $t$ by $t(u-1)^{-1}(x-1)^{-1}$ and (\ref{E:eqC}), we find the formula in Theorem \ref{T:4}. 
\qed

As stated in Remark \ref{R:1}, we will show $\CMcal{G}(t;x,u,v,1,1)=\CMcal{G}(t;u,x,v,1,1)$ by a transformation formula of $_4\phi_3$ series. Let us recall some definitions.

For indeterminates $a$ and $q$ (the latter is referred to as the base),
and non-negative integer $k$,
the basic shifted factorial (or $q$-shifted factorial) is defined as
\begin{equation*}
(a_1,\ldots,a_m;q)_k:=(a_1;q)_k\cdots(a_m;q)_k,
\end{equation*}
where again $k$ is a non-negative integer.

An ${}_\alpha\phi_\beta$ basic hypergeometric series with $\alpha$ upper
parameters $a_1,\ldots,a_\alpha$, and $\beta$ lower parameter
$b_1,\ldots,b_\beta$, base $q$ and argument $z$
is defined as
\begin{equation}\label{eq:bhyp}
{}_\alpha\phi_\beta\!\left[\begin{matrix}a_1,\ldots,a_\alpha\\
b_1,\ldots,b_\beta\end{matrix};q,z\right]:=
\sum_{k=0}^\infty\frac{(a_1,\ldots,a_\alpha;q)_k}
{(q,b_1,\ldots,b_\beta;q)_k}\left((-1)^kq^{\binom k2}\right)^{1+\beta-\alpha}z^k.
\end{equation}
\begin{theorem}[\cite{js}]\label{thm:4phi3tf}
	Let $a,b,c,d,e,r$ be complex variables, $j$ be a non-negative integer.
	Then, assuming that none of the denominator factors in \eqref{tf43}
	have vanishing constant term in $r$, we have the following transformation
	of convergent power series in $a$ and $r$:
	\begin{align}\label{tf43}
	&{}_4\phi_3\!\left[\begin{matrix}(1-r)^j,1-a,b,c\\
	d,e,(1-r)^{j+1}(1-a)bc/de\end{matrix};1-r,1-r\right]\notag\\
	&=\frac{((1-r)/e,(1-r)(1-a)bc/de;1-r)_j}
	{((1-r)(1-a)/e,(1-r)bc/de;1-r)_j}\notag\\
	&\quad\;\times{}_4\phi_3\!\left[\begin{matrix}(1-r)^j,1-a,d/b,d/c\\
	d,de/bc,(1-r)^{j+1}(1-a)/e\end{matrix};1-r,1-r\right].
	\end{align}
\end{theorem}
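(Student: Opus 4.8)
The plan is to recognize both hypergeometric series in \eqref{tf43} as \emph{balanced} (Saalsch\"utzian) ${}_4\phi_3$ series whose base and argument both equal $1-r$, and then to obtain the two-term transformation from the classical Sears transformation for terminating balanced ${}_4\phi_3$ series. First I would verify the balance condition. Writing $q=1-r$, the product of the four upper parameters on the left is $q^{j}(1-a)bc$, while the product of the three lower parameters is $d\cdot e\cdot q^{j+1}(1-a)bc/de=q^{j+1}(1-a)bc=q\cdot q^{j}(1-a)bc$, so the lower product exceeds the upper by exactly one factor of $q$; with argument $z=q$ this is precisely the balanced condition. The same check applies to the series on the right: under $b\mapsto d/b$ and $c\mapsto d/c$, with lower parameters $d$, $de/bc$ and $q^{j+1}(1-a)/e$, the upper product becomes $q^{j}(1-a)d^{2}/bc$ and the lower product becomes $q^{j+1}(1-a)d^{2}/bc$, again differing by one factor of $q$.

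Second, I would match the theorem against Sears' transformation of a terminating balanced ${}_4\phi_3$ series (as recorded in Gasper and Rahman) under the dictionary in which Sears' leading parameter $q^{-n}$ is played by $q^{j}$, his $a$ by $1-a$, and $b,c,d,e$ left unchanged; the involution $b\mapsto d/b$, $c\mapsto d/c$ on the upper parameters, together with the induced change $e\mapsto de/bc$ on the lower ones, is exactly the Sears rearrangement, and a short computation identifies his prefactor $a^{n}(e/a,f/a;q)_{n}/(e,f;q)_{n}$ with the ratio of $q$-shifted factorials displayed in \eqref{tf43}. Since $q^{j}=(1-r)^{j}$ is a \emph{positive} power, however, neither series terminates, so Sears' identity cannot be invoked directly, and the crux is to lift the terminating identity to this regime. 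I would accomplish this by a continuation argument in a free parameter: for each non-negative integer $N$ the specialization $c=q^{-N}$ truncates both ${}_4\phi_3$'s to finite sums, for which \eqref{tf43} reduces to a genuine terminating balanced transformation and hence holds by Sears; since both sides of \eqref{tf43}, expanded as power series in $a$ and $r$, have coefficients that are rational in $c$ of bounded degree, agreement at the infinitely many geometric points $c=q^{-N}$ forces the identity as formal power series in $a$ and $r$.

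The step I expect to be the main obstacle is precisely this lifting to the non-terminating case. A generic balanced non-terminating ${}_4\phi_3$ satisfies only a \emph{three}-term transformation, so the clean two-term form in \eqref{tf43} is a genuine consequence of the special structure imposed by the integer power $q^{j}$, and the continuation must be arranged so that the would-be third term provably vanishes at every terminating specialization. Controlling the $r$-adic and $c$-degree bounds needed to invoke the uniqueness principle, and checking that the denominator non-vanishing hypotheses persist after $c=q^{-N}$, are the delicate points. An attractive alternative that sidesteps these analytic issues is to induct on $j$, using the trivial base case $j=0$, where $(1-r)^{0}=1$ collapses each series to its $k=0$ term and both sides equal $1$, and then matching a contiguous relation in $j$ on the two sides; the cost is a more involved algebraic identity at the inductive step, but it avoids any appeal to analytic continuation.
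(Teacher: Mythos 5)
There is nothing in this paper to compare your argument against: Theorem \ref{thm:4phi3tf} is imported verbatim from \cite{js} and the present paper offers no proof of it, only the citation. So your attempt has to be judged on its own. Your structural observations are correct: with $q=1-r$ both series are balanced, the substitution $q^{-n}\mapsto q^{j}$ (i.e.\ $n\mapsto -j$) together with $a\mapsto 1-a$ turns Sears' terminating transformation formally into \eqref{tf43}, and the prefactor $a^{n}(e/a,f/a;q)_{n}/(e,f;q)_{n}$ does reduce, via $(xq^{-j};q)_j=(-x)^jq^{-\binom{j+1}{2}}(q/x;q)_j$, to the displayed ratio of $q$-shifted factorials. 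The problem is entirely in the lifting step, and the specific mechanism you propose does not work.

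First, the specialization $c=q^{-N}$ truncates only the left-hand series. On the right, $c$ occurs as $d/c=dq^{N}$ in the numerator and as $de/bc$ in a denominator, so the right-hand series remains non-terminating; the specialized identity is therefore \emph{not} an instance of the terminating Sears transformation, and "holds by Sears" is unjustified at exactly the point where you need it. (A specialization that truncates both sides would have to hit a parameter appearing as an upper parameter on both sides, e.g.\ $1-a=q^{-N}$.) Second, even after fixing that, the continuation argument is incompatible with the framework in which the theorem lives: \eqref{tf43} is an identity of formal power series in $a$ and $r$, each coefficient of $a^{m}r^{l}$ being a finite sum because $(1-a;1-r)_k\in(a,r)^k$. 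The evaluation points $q^{-N}=(1-r)^{-N}$ are themselves power series in $r$, not constants, so substituting them does not commute with extracting $r$-adic coefficients, and the "rational of bounded degree in $c$, agreeing at infinitely many points" principle cannot be applied coefficient-by-coefficient as you intend. You correctly flag the non-terminating lift as the crux and note that a generic balanced non-terminating ${}_4\phi_3$ obeys only a three-term relation, but the argument you give does not close that gap. Your fallback — induction on $j$ from the trivial base case $j=0$ (where both sides collapse to $1$) via a contiguous relation that shifts $(1-r)^{j}$ and the linked lower parameter $(1-r)^{j+1}(1-a)bc/de$ simultaneously — is much better adapted to the formal power series setting and is the direction I would pursue; alternatively, one can run the classical $q$-Pfaff--Saalsch\"utz double-sum proof of Sears directly, since the interchange of summation it requires is legitimate in the $(a,r)$-adic topology. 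As written, however, the proof is not complete.
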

We now specialize the identity in \eqref{tf43} as follows:
We take $j=1$, do the substitutions
$r\mapsto t$, and $a\mapsto at$,
and do the following further replacements.
\begin{alignat*}{2}
&b\mapsto \frac{(1-qt)(1-qv)}{x(1-vt)(1-v)},&\qquad
&c\mapsto \frac{(1-vt)(1-qv)}{x(1-qt)(1-q)},\\
&d\mapsto \frac{(1-qv)^2}{ux(1-v)(1-q)},&\qquad
&e\mapsto\frac{(1-t)^2}x.     
\end{alignat*}
The result is the following transformation formula
of infinite series (which is actually even a formal power series
in $t$) which we write out in explicit terms:
\begin{align*}
& \quad\sum_{k\ge 0}\frac{\left( \frac{(1-qt)(1-qv)}{x(1-vt)(1-v)},
	\frac{(1-vt)(1-qv)}{x(1-qt)(1-q)},1-at;\,1-t\right)_{_k}}
{\left( \frac{(1-qv)^2}{ux(1-v)(1-q)},\frac{(1-t)^2}x,(1-at)u;\,
	1-t\right)_{_k}} (1-t)^k\\
&=\frac{(1-t-x)\,(1-t-u+uat)}
{(1-t-u)\,(1-t-x+xat)}\\
&\quad\quad\times
\sum_{k\ge 0}\frac{\left( \frac{(1-qt)(1-qv)}{u(1-vt)(1-v)},
	\frac{(1-vt)(1-qv)}{u(1-qt)(1-q)},1-at;\,1-t\right)_{_k}}
{\left( \frac{(1-qv)^2}{ux(1-v)(1-q)},\frac{(1-t)^2}u,(1-at)x;\,
	1-t\right)_{_k}} (1-t)^k.
\end{align*}
As a consequence, we immediately deduce the following result:
\begin{corollary}\label{cor1}
	Let
	\begin{align*}
	&\widetilde{\CMcal{H}}_1(t;x,u,v,q,a)
	:=\frac{qvt(1-t-x+xat)}{(1-t)(1-t-x)}\\
	&\times \sum_{k\ge 0}\frac{\left( \frac{(1-qt)(1-qv)}{x(1-vt)(1-v)},
		\frac{(1-vt)(1-qv)}{x(1-qt)(1-q)},1-at;\,1-t\right)_{_k}}
	{\left( \frac{(1-qv)^2}{ux(1-v)(1-q)},\frac{(1-t)^2}x,(1-at)u;\,
		1-t\right)_{_k}} (1-t)^k.   
	\end{align*}
	Then we have the symmetries
	$$\widetilde{\CMcal{H}}_1(t;x,u,v,q,a)=
	\widetilde{\CMcal{H}}_1(t;u,x,v,q,a)=
	\widetilde{\CMcal{H}}_1(t;x,u,q,v,a)=
	\widetilde{\CMcal{H}}_1(t;u,x,q,v,a).$$
\end{corollary}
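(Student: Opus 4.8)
The plan is to read off both of the claimed symmetries directly from the explicit transformation formula displayed immediately before the corollary, which is nothing but the $j=1$ specialization of Theorem~\ref{thm:4phi3tf}. Write $S(t;x,u,v,q,a)$ for the series appearing on the left-hand side of that displayed identity, so that by definition
\[
\widetilde{\CMcal{H}}_1(t;x,u,v,q,a)=\frac{qvt(1-t-x+xat)}{(1-t)(1-t-x)}\,S(t;x,u,v,q,a).
\]
The whole argument then reduces to bookkeeping: Theorem~\ref{thm:4phi3tf} provides the only nontrivial input, and no further analytic ingredient is needed.

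First I would establish the symmetry $x\leftrightarrow u$. One checks by inspection that interchanging $x$ and $u$ in every parameter of $S(t;x,u,v,q,a)$ produces exactly the series on the right-hand side of the displayed identity: the two nontrivial upper parameters and the lower parameters $\tfrac{(1-t)^2}{x}$, $(1-at)u$ all carry their $x$/$u$ dependence explicitly, while the remaining lower parameter $\tfrac{(1-qv)^2}{ux(1-v)(1-q)}$ is symmetric in $x,u$ and hence fixed. Thus the displayed identity reads
\[
S(t;x,u,v,q,a)=\frac{(1-t-x)(1-t-u+uat)}{(1-t-u)(1-t-x+xat)}\,S(t;u,x,v,q,a).
\]
Multiplying through by the prefactor $\tfrac{qvt(1-t-x+xat)}{(1-t)(1-t-x)}$ and cancelling the factors $(1-t-x)$ and $(1-t-x+xat)$ leaves precisely $\tfrac{qvt(1-t-u+uat)}{(1-t)(1-t-u)}\,S(t;u,x,v,q,a)$, which is exactly $\widetilde{\CMcal{H}}_1(t;u,x,v,q,a)$. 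This yields $\widetilde{\CMcal{H}}_1(t;x,u,v,q,a)=\widetilde{\CMcal{H}}_1(t;u,x,v,q,a)$.

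Next I would handle the swap $v\leftrightarrow q$, for which both the prefactor and the series are invariant \emph{by inspection}. The prefactor depends on $v,q$ only through the symmetric product $qv$. In the series, the two nontrivial upper parameters $\tfrac{(1-qt)(1-qv)}{x(1-vt)(1-v)}$ and $\tfrac{(1-vt)(1-qv)}{x(1-qt)(1-q)}$ are merely interchanged by $v\leftrightarrow q$ (using $(1-qv)=(1-vq)$), the third upper parameter $1-at$ is fixed, and all three lower parameters $\tfrac{(1-qv)^2}{ux(1-v)(1-q)}$, $\tfrac{(1-t)^2}{x}$, $(1-at)u$ are fixed. Since a basic hypergeometric series is symmetric in its upper parameters, $S(t;x,u,v,q,a)=S(t;x,u,q,v,a)$, and hence $\widetilde{\CMcal{H}}_1(t;x,u,v,q,a)=\widetilde{\CMcal{H}}_1(t;x,u,q,v,a)$. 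The remaining symmetry $\widetilde{\CMcal{H}}_1(t;x,u,v,q,a)=\widetilde{\CMcal{H}}_1(t;u,x,q,v,a)$ then follows by composing the previous two.

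There is no genuine obstacle in this proof; the subtlety lies entirely in the prefactor. The rational factor built into the definition of $\widetilde{\CMcal{H}}_1$ is chosen precisely so as to absorb the rational prefactor appearing in \eqref{tf43}, and the crux of the argument is verifying that $(1-t-x)$ and $(1-t-x+xat)$ cancel cleanly so that what remains is exactly the $x\leftrightarrow u$ image of the original prefactor. Once this cancellation is confirmed, the two symmetries are essentially automatic, the $v\leftrightarrow q$ one requiring only the symmetry of ${}_4\phi_3$ in its upper parameters.
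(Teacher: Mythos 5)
Your proposal is correct and follows exactly the paper's route: the $x\leftrightarrow u$ symmetry is read off from the $j=1$ specialization of Theorem~\ref{thm:4phi3tf} (the displayed transformation preceding the corollary), with the rational prefactor of $\widetilde{\CMcal{H}}_1$ absorbing the factor $\frac{(1-t-x)(1-t-u+uat)}{(1-t-u)(1-t-x+xat)}$, and the $v\leftrightarrow q$ symmetry is seen by inspection since the prefactor depends only on $qv$ and the swap merely permutes the two nontrivial upper parameters while fixing all lower ones. The paper states this as an immediate consequence; you have simply written out the same bookkeeping in full.
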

The special case $a=t^{-1}$ is of interest but then series
is not any more a formal power series in $t$:

\begin{corollary}\label{cor2}
	Let
	\begin{equation*}
	\CMcal{H}_1(t;x,u,v,q)
	:=\frac{qvt}{1-t-x}
	\sum_{k\ge 0}\frac{\left( \frac{(1-qt)(1-qv)}{x(1-vt)(1-v)},
		\frac{(1-vt)(1-qv)}{x(1-qt)(1-q)};\,1-t\right)_{_k}}
	{\left( \frac{(1-qv)^2}{ux(1-v)(1-q)},\frac{(1-t)^2}x;\,
		1-t\right)_{_k}} (1-t)^k.   
	\end{equation*}
	Then we have the symmetries
	$$\CMcal{H}_1(t;x,u,v,q)=\CMcal{H}_1(t;u,x,v,q)=
	\CMcal{H}_1(t;x,u,q,v)=\CMcal{H}_1(t;u,x,q,v).$$
\end{corollary}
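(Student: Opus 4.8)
The plan is to read off Corollary \ref{cor2} as the specialization $a = t^{-1}$ of Corollary \ref{cor1}. First I would record the purely algebraic fact that $\widetilde{\CMcal{H}}_1(t;x,u,v,q,t^{-1}) = \CMcal{H}_1(t;x,u,v,q)$. Setting $at = 1$ forces the upper parameter $1-at$ and the lower parameter $(1-at)u$ to vanish, so that the two corresponding $q$-shifted factorials to the base $1-t$ collapse to $(0;1-t)_k = \prod_{j=0}^{k-1}\big(1 - 0\cdot(1-t)^j\big) = 1$ for every $k \ge 0$; they therefore cancel termwise and leave exactly the summand of $\CMcal{H}_1$. At the same time the prefactor simplifies, because $1-t-x+xat = 1-t$ turns $\frac{qvt(1-t-x+xat)}{(1-t)(1-t-x)}$ into $\frac{qvt}{1-t-x}$. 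I would note that the same computation works verbatim for the three permuted arguments: interchanging $x$ and $u$ replaces the lower parameter by $(1-at)x$ (still vanishing at $at=1$) and the prefactor by $\frac{qvt}{1-t-u}$, while interchanging $v$ and $q$ does not touch the $a$-dependent factors. Hence each of the four functions in Corollary \ref{cor1}, evaluated at $a = t^{-1}$, reduces to the corresponding function in Corollary \ref{cor2}.

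The delicate point, flagged just before the statement, is that at $a = t^{-1}$ the series is no longer a formal power series in $t$, so the substitution cannot be performed formally. I would instead argue analytically in the variable $a$. Fix $t$ with $0 < |1-t| < 1$. Since the base $1-t$ has modulus below one, every $q$-shifted factorial $(\,\cdot\,;1-t)_k$ occurring in $\widetilde{\CMcal{H}}_1$ tends to a finite infinite product as $k \to \infty$, and each summand carries the factor $(1-t)^k$; consequently, for each fixed $a$ at which no denominator $q$-shifted factorial vanishes, the defining series converges absolutely. Thus, for this fixed $t$, each of the four versions of $\widetilde{\CMcal{H}}_1$ is a meromorphic function of $a$ on all of $\mathbb{C}$.

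To finish, I would invoke the identity theorem. The symmetries of Corollary \ref{cor1} hold near $a = 0$, where $\widetilde{\CMcal{H}}_1$ is a genuine convergent power series and Theorem \ref{thm:4phi3tf} applies directly; because the four versions are meromorphic in $a$, these symmetries then propagate to all admissible $a$, in particular to $a = t^{-1}$, at which (as observed above) the vanishing of $(1-at)u$ produces the value $1$, not a pole. Evaluating the symmetry chain of Corollary \ref{cor1} at $a = t^{-1}$ and applying the termwise simplification above yields $\CMcal{H}_1(t;x,u,v,q) = \CMcal{H}_1(t;u,x,v,q) = \CMcal{H}_1(t;x,u,q,v) = \CMcal{H}_1(t;u,x,q,v)$. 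The one real obstacle is precisely this transition from the formal identity of Corollary \ref{cor1} to its value at the non-formal point $a = t^{-1}$; the cancellation itself is routine.
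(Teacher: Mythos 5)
Your proposal is correct and follows the same route as the paper, which obtains Corollary~\ref{cor2} precisely as the specialization $a=t^{-1}$ of Corollary~\ref{cor1}, using the collapse $(1-at;1-t)_k=((1-at)u;1-t)_k=(0;1-t)_k=1$ and the simplification $1-t-x+xat=1-t$ of the prefactor. The paper leaves the passage from the formal identity in $a$ to the non-formal point $a=t^{-1}$ implicit (it only remarks that the series is then no longer a formal power series in $t$), so your meromorphic-continuation justification supplies a detail the paper omits rather than a different argument.
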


A consequence of the $q\to 1$ limit of Corollary~\ref{cor2}
is the symmetry $\CMcal H(t;x,u,v,1)=\CMcal H(t;u,x,v,1)$,
which, in explicit terms, says that the series
\begin{equation*}
\CMcal{H}_1(t;x,u,v,1)
=\frac{vt}{1-t-x}
\sum_{k\ge 0}\frac{\left( \frac{(1-t)}{x(1-vt)};\,1-t\right)_{_k}}
{\left(\frac{(1-t)^2}x;\,
	1-t\right)_{_k}} u^k(1-vt)^k,
\end{equation*}
or equivalently (\ref{E:adr1}), is symmetric in $x$ and $u$. Next with a different substitution, we shall prove that (\ref{E:adr2}) is invariant if we interchange $u$ and $x$. 

We again choose $j=1$, do the substitutions
$r\mapsto t$, $a\mapsto at$ and 
\begin{alignat*}{2}
&b\mapsto \frac{x(1-qt)(1-qv)}{(1-t)(1-v)},&\qquad
&c\mapsto \frac{x(1-vt)(1-qv)}{(1-t)(1-q)},\\
&d\mapsto \frac{ux(1-qv)^2(1-qt)(1-vt)}{(1-t)^2(1-v)(1-q)},&\qquad
&e\mapsto x(1-vt)(1-t).     
\end{alignat*}
As a result, we arrive at
\begin{align*}
&\quad \sum_{k\ge 0}\frac{\left( \frac{x(1-qt)(1-qv)}{(1-t)(1-v)},
	\frac{x(1-vt)(1-qv)}{(1-t)(1-q)},1-at;\,1-t\right)_{_k}}
{\left( \frac{ux(1-qv)^2(1-qt)(1-vt)}{(1-t)^2(1-v)(1-q)},x(1-vt)(1-t),\frac{(1-at)(1-t)}{u(1-vt)};\,
	1-t\right)_{_k}} (1-t)^k\\
&=\frac{(x-xvt-1)\,(u-uvt-1+at)}
{(u-uvt-1)\,(x-xvt-1+at)}\\
&\quad\quad\times
\sum_{k\ge 0}\frac{\left( \frac{u(1-qt)(1-qv)}{(1-t)(1-v)},
	\frac{u(1-vt)(1-qv)}{(1-t)(1-q)},1-at;\,1-t\right)_{_k}}
{\left(\frac{ux(1-qv)^2(1-qt)(1-vt)}{(1-t)^2(1-v)(1-q)},u(1-vt)(1-t),\frac{(1-at)(1-t)}{x(1-vt)};\,
	1-t\right)_{_k}} (1-t)^k.
\end{align*}
It follows directly that 
\begin{corollary}
	Let 
	\begin{align*}
	\CMcal{H}_2^*&(t;x,u,q,v,a):=\frac{x-xvt-1+at}{x-xvt-1}\\
	&\times \sum_{k\ge 0}\frac{\left( \frac{x(1-qt)(1-qv)}{(1-t)(1-v)},
		\frac{x(1-vt)(1-qv)}{(1-t)(1-q)},1-at;\,1-t\right)_{_k}}
	{\left( \frac{ux(1-qv)^2(1-qt)(1-vt)}{(1-t)^2(1-v)(1-q)},x(1-vt)(1-t),\frac{(1-at)(1-t)}{u(1-vt)};\,
		1-t\right)_{_k}} (1-t)^k.
	\end{align*}
	Then we have the symmetries $	\CMcal{H}_2^*(t;x,u,q,v,a)=\CMcal{H}_2^*(t;u,x,q,v,a)$.
\end{corollary}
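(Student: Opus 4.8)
The plan is to read the corollary directly off the transformation formula displayed immediately before it, since the prefactor in the definition of $\CMcal{H}_2^*$ is tailored precisely to absorb the rational factor produced by that transformation. First I would abbreviate by $\Sigma(x,u)$ the basic hypergeometric sum appearing on the left-hand side of that displayed identity. The key structural point to record is that the sum on the right-hand side is exactly $\Sigma(u,x)$: interchanging $x\leftrightarrow u$ sends the two upper arguments $\frac{x(1-qt)(1-qv)}{(1-t)(1-v)}$ and $\frac{x(1-vt)(1-qv)}{(1-t)(1-q)}$ to their $u$-versions, and sends the two lower arguments $x(1-vt)(1-t)$ and $\frac{(1-at)(1-t)}{u(1-vt)}$ to $u(1-vt)(1-t)$ and $\frac{(1-at)(1-t)}{x(1-vt)}$, while fixing both the $ux$-symmetric lower parameter $\frac{ux(1-qv)^2(1-qt)(1-vt)}{(1-t)^2(1-v)(1-q)}$ and the numerator factor $1-at$. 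Thus the displayed transformation is the compact relation
\[
\Sigma(x,u)=\frac{(x-xvt-1)(u-uvt-1+at)}{(u-uvt-1)(x-xvt-1+at)}\,\Sigma(u,x).
\]

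With this relation the conclusion is immediate. By definition $\CMcal{H}_2^*(t;x,u,q,v,a)=\frac{x-xvt-1+at}{x-xvt-1}\,\Sigma(x,u)$, so substituting gives
\[
\CMcal{H}_2^*(t;x,u,q,v,a)=\frac{x-xvt-1+at}{x-xvt-1}\cdot\frac{(x-xvt-1)(u-uvt-1+at)}{(u-uvt-1)(x-xvt-1+at)}\,\Sigma(u,x)=\frac{u-uvt-1+at}{u-uvt-1}\,\Sigma(u,x),
\]
where the factors $x-xvt-1+at$ and $x-xvt-1$ cancel in pairs. The right-most expression is precisely $\CMcal{H}_2^*(t;u,x,q,v,a)$, which establishes the claimed symmetry.

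I do not expect a genuine obstacle: the substantive work, namely choosing $j=1$ together with the specific substitutions for $r,a,b,c,d,e$ that collapse Theorem \ref{thm:4phi3tf} into the displayed identity, has already been carried out above. What remains is purely bookkeeping, and the two points deserving care are (i) verifying that the right-hand sum is the verbatim $x\leftrightarrow u$ reflection of $\Sigma(x,u)$ --- in particular that the symmetric denominator factor and the $(1-at)$ factor are invariant under the swap --- and (ii) checking that the defining prefactor of $\CMcal{H}_2^*$ telescopes against the rational factor exactly as displayed. Since the identity holds as an identity of formal power series in $t$, the cancellation of these prefactors is legitimate and the symmetry follows termwise.
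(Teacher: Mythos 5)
Your proposal is correct and matches the paper's argument: the paper derives the displayed $x\leftrightarrow u$ transformation from Theorem \ref{thm:4phi3tf} (with $j=1$ and the stated substitutions) and then states that the corollary ``follows directly,'' which is exactly the prefactor-cancellation bookkeeping you carry out explicitly. Your verification that the right-hand sum is the verbatim $x\leftrightarrow u$ reflection (with the $ux$-symmetric lower parameter and $1-at$ fixed) is the only substantive check, and it is done correctly.
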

Choose $a=t^{-1}$ and let $q\rightarrow 1$, set
\begin{align*}
\CMcal{H}_2(t;x,u,1,v):=\frac{x(1-vt)}{x-xvt-1}
\sum_{k\ge 0}\frac{\left(x;\,1-t\right)_{_k}}
{\left(x(1-vt)(1-t);\,
	1-t\right)_{_k}} u^{-k}(1-t)^k,
\end{align*}
which satisfies $\CMcal{H}_2(t;x,u,1,v)=\CMcal{H}_2(t;u,x,1,v)$. Since (\ref{E:adr2})$=vt(ux(1-vt))^{-1}\CMcal{H}_2(t;x,u,1,v)$, (\ref{E:adr2}) is also symmetric in $u$ and $x$, as desired.

\section{Proof of Theorem \ref{T:asczeromax}}\label{S:5}
The purpose of this section is to present two applications of a typical construction of inversion sequences, namely, always add a new entry at the end. The first application (Theorem \ref{T:bij}) is a bijective proof of a quadruple equidistribution of Euler-Stirling statistics over $\I_n$, as a step towards solving a conjecture on a quintuple equidistribution (see Section \ref{S:final}). The second one is a short proof of Theorem \ref{T:asczeromax}.

\begin{definition}
Given any sequence $s=(s_1,s_2,\ldots,s_n)\in\CMcal{I}_n$, let
\begin{align*}
\mathsf{Rmin}(s):=\{s_i: s_i<s_j \mbox{ for all } j>i\}
\end{align*}
be a set-valued statistic. Let $\mathsf{last}(s)$ be the last entry of $s$, then $\mathsf{last}(s)$ is the largest integer of $\Rmin(s)$.
For instance, when $s=(0,0,2,1,4,3)$, $\Rmin(s)=\{0,1,3\}$.
\end{definition}

\begin{theorem}\label{T:bij}
The quadruple of statistics $(\asc,\zero, \max, \mathsf{Rmin})$ have the same distribution as the one $(\mathsf{dist}, \zero, \max, \mathsf{Rmin})$ on inversion sequences.
\end{theorem}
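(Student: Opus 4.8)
The plan is to prove the equidistribution bijectively. Since the set-valued statistic $\mathsf{Rmin}$ occupies the same slot in both quadruples, it suffices to construct a bijection $\phi\colon\I_n\to\I_n$ that preserves $\zero$, $\max$ and the entire set $\mathsf{Rmin}$, while sending $\asc$ to $\dist$, i.e. $\asc(s)=\dist(\phi(s))$. The single observation that makes the ``add a new entry at the end'' construction compatible with a set-valued statistic is that $\mathsf{last}(s)$ is the largest element of $\mathsf{Rmin}(s)$; hence any $\phi$ preserving $\mathsf{Rmin}$ is forced to preserve the last entry, and conversely one may hope to build $\phi$ by recursion on the length, stripping off this common last entry, with the trivial base case $\I_1=\{(0)\}$.

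First I would record how each statistic reacts to appending a last entry $k$ to a sequence $w\in\I_{n-1}$. Writing $s=(w,k)$, one has $\zero(s)=\zero(w)+[k=0]$ and $\max(s)=\max(w)+[k=n-1]$, which depend only on $k$; moreover $\mathsf{Rmin}(s)=\{v\in\mathsf{Rmin}(w):v<k\}\cup\{k\}$, so that exactly the right-to-left minima of value $\ge k$ are erased; $\asc(s)=\asc(w)+[\mathsf{last}(w)<k]$, where the indicator records whether $w$ has no right-to-left minimum of value $\ge k$; and on the image side $\dist(s)=\dist(w)+[k\notin\{w_1,\dots,w_{n-1}\}]$. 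Thus, given $s=(s',k)$, the map $\phi$ should output $(t',k)$, where $t'$ is produced from $s'$ by a length-preserving bijection that keeps $\zero$, $\max$ and the truncated set $\{v\in\mathsf{Rmin}(\cdot):v<k\}$ intact, and that converts $\asc(s')+[\mathsf{last}(s')<k]$ into $\dist(t')+[k\notin\text{values of }t']$.

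This reduces the theorem to a joint induction over a family of bijections $\beta_k$ on $\I_{n-1}$, one per threshold $k$, each preserving $\zero$, $\max$ and the right-to-left minima of value below $k$, and each transporting the modified ascent statistic to the modified distinct-value statistic; one then sets $\phi(s):=(\beta_{s_n}(s'),s_n)$, which is a bijection on $\I_n$ precisely because each $\beta_k$ is a bijection on $\I_{n-1}$ and the blocks of fixed last entry are permuted within themselves. Two extremes anchor the family: for $k\ge n-1$ all right-to-left minima lie below $k$ and $k$ can never be a value, so both indicators are identically $1$ and $\beta_k$ must coincide with $\phi$ on $\I_{n-1}$; for $k=0$ both indicators vanish and there is no constraint coming from $\mathsf{Rmin}$, giving a less constrained, more tractable case. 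Each $\beta_k$ is again defined by the same strip-the-last-entry recursion, so the whole construction is a nested induction whose bookkeeping I would organize by the threshold $k$ and the ``low profile'' $\{v\in\mathsf{Rmin}:v<k\}$.

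The hard part will be the clause relating the two increments. The ascent increment is governed by a global order condition on the source, namely whether $s'$ possesses any right-to-left minimum of value $\ge k$, whereas the distinct-value increment is governed by a support condition on the image, namely whether the value $k$ occurs in $t'$ at all. Reconciling these different mechanisms, i.e. arranging $\beta_k$ so that ``$s'$ has no right-to-left minimum $\ge k$'' holds if and only if ``$k$ is absent from $t'$'', while simultaneously matching $\asc$ with $\dist$ on the remaining data and leaving $\zero$, $\max$ and the low profile untouched, is the crux of the argument. Concretely it amounts to redistributing the entries lying above the threshold $k$ (exactly those whose right-to-left-minimum status is destroyed by appending $k$) into occurrences or non-occurrences of the value $k$. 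Once this fiberwise correspondence is established and the four statistics are verified case by case, assembling $\beta_k$ over all $k$ and all low profiles yields $\phi$ and closes the induction.
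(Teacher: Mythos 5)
Your reduction is set up along the same lines as the paper's proof: induct on the length, strip the common last entry $k=\mathsf{last}(s)$ (forced because $\mathsf{last}$ is the maximum of $\mathsf{Rmin}$), record that appending $k$ changes $\asc$ by the indicator $[\mathsf{last}(s')<k]$ and $\dist$ by the indicator $[k\notin\{s'_1,\dots,s'_{n-1}\}]$, and try to match these two increments while preserving $\zero$, $\max$ and the truncated set $\{v\in\mathsf{Rmin}(\cdot):v<k\}$. But the argument stops exactly where the real work begins. You name the crux yourself --- arranging matters so that ``$s'$ has no right-to-left minimum $\ge k$'' on the source corresponds to ``$k$ is absent'' on the target, compatibly with $\asc\mapsto\dist$ and with the three preserved statistics --- and then assert that ``once this fiberwise correspondence is established'' the induction closes. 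That correspondence is the entire content of the theorem at each inductive step; nothing in the proposal constructs it or even shows it exists, so what you have is a reduction of the statement to an unproven combinatorial matching of essentially the same difficulty.

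For comparison, the paper closes this gap as follows. It takes as induction hypothesis the polynomial identity $\sum_{\Rmin(s)=A}u^{\asc(s)}q^{\max(s)}z^{\zero(s)}=\sum_{\Rmin(s)=A}u^{\dist(s)}q^{\max(s)}z^{\zero(s)}$ over $\I_{n-1}$ for every set $A$ (so it never needs a single global bijection $\phi$, only the refined equidistribution), splits $\{s\in\I_n:\Rmin(s)=A,\ s_n=j\}$ on the $\asc$ side according to whether $s_{n-1}<j$ and on the $\dist$ side according to whether $j\in\mathsf{Dist}(s)$, and then supplies two explicit re-attachment bijections: one sending $s'\in\I_{n-1}$ with $\Rmin(s')=A\setminus\{j\}$ to a length-$n$ sequence whose last entry $j$ is a new value (the four-step construction that shifts positions, increments entries $\ge j$, and cyclically relocates the entries below $j$ so as to preserve the right-to-left minima), and one sending $s'$ with $\Rmin(s')=A'_j$, $\max(A'_j)\ge j$, to a length-$n$ sequence whose last entry $j$ is a repeated value (the two-case construction with the parameters $m$ and $k$). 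These constructions, together with the case-by-case verification that they preserve $\zero$, $\max$ and $\Rmin$ while adjusting $\dist$ correctly and are invertible, are precisely the ``redistribution of the entries above the threshold'' that your proposal leaves undone; without them the proof is incomplete.
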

\begin{remark}
	Lin and Kim \cite{kl} discovered a bijection between the quadruples $(\asc,\max,\zero,\mathsf{last})$ and $(\dist,\max,\zero,\mathsf{last})$ of statistics over $\I_n$, but it seems that one can not prove Theorem \ref{T:bij}
    by their bijection.
\end{remark}
\begin{proof}
We shall prove it by induction. It is easy to check the trivial case for $n=1$. Now suppose that the claim is true for inversion sequences of length $n-1$, that is, for any set $A$ with $\max(A)=m$ (here $\max(A)$ means the largest element of $A$), we have 
\begin{align}\label{E:1}
\sum_{\substack{s\in\CMcal{I}_{n-1}\\\Rmin(s)=A}}u^{\asc(s)}q^{\max(s)}z^{\zero(s)}=\sum_{\substack{s\in\CMcal{I}_{n-1}\\\Rmin(s)=A}}u^{\mathsf{dist}(s)}q^{\max(s)}z^{\zero(s)}.
\end{align}
We will prove that (\ref{E:1}) is also true if $n-1$ is replaced by $n$. For any $j\ne n-1$, we have, by (\ref{E:1}), that for any $A$ with $\max(A)=j$ and $A':=A-\{j\}$,
\begin{align}
\nonumber\sum_{\substack{s\in\CMcal{I}_{n}\\ \Rmin(s)=A\\ s_{n-1}<s_n=j}}u^{\asc(s)}q^{\max(s)}z^{\zero(s)}
&=\sum_{\substack{s\in\CMcal{I}_{n-1}\\\Rmin(s)=A'}}u^{\asc(s)+1}q^{\max(s)}z^{\zero(s)}\\
\label{E:31}&=\sum_{\substack{s\in\CMcal{I}_{n-1}\\\Rmin(s)=A'}}u^{\mathsf{dist}(s)+1}q^{\max(s)}z^{\zero(s)}.
\end{align}
Given a sequence $s=(s_1,s_2,\ldots,s_{n-1})\in\I_{n-1}$ with $\Rmin(s)=A'$ and $\max(A')<j$, we will construct a new sequence $s'$ from $s$ by implementing the following steps:
\begin{itemize}
	\item move all entries $s_{j+1},s_{j+2},\ldots,s_{n-1}$ one position to the right;
	\item replace $s_m$ by $s_m+1$ whenever $s_m\ge j$ (so that the sequence contains no entry $j$);
	\item let $i_1<i_2<\cdots<i_k$ be a sequence of positions after $j+1$ at which the entries are less than $j$, then move the entry $s_{i_1}$ to $(j+1)$-th position; move the entry $s_{i_\ell}$ to $i_{\ell-1}$-th position for $2\le \ell\le k$ (this preserves the values of right-to-left-minima);
	\item add $j$ at the end (by which $j$ is a newly added right-to-left minimum).
\end{itemize}
Let $s'$ be the resulting sequence, it is straightforward to examine the change of statistics. The number of maximal entries and zeros stay the same as $j\ne n-1, \ne 0$, while the number of distinct entries is increased by one and $\Rmin(s')=A$. The above steps are reversible, so the map $s\mapsto s'$ is a bijection. This implies that
\begin{align*}
\sum_{\substack{s\in\CMcal{I}_{n-1}\\\Rmin(s)=A'}}u^{\mathsf{dist}(s)+1}q^{\max(s)}z^{\zero(s)}
&=\sum_{\substack{s\in\CMcal{I}_{n}\\ \Rmin(s)=A\\ 
		s_n=j\in \mathsf{Dist}(s)}}u^{\mathsf{dist}(s)}q^{\max(s)}z^{\zero(s)}.
\end{align*}
Here $j\in\mathsf{Dist}(s)$ means that $j$ is a distinct entry of $s$. In combination of (\ref{E:31}), we see that 
\begin{align*}
\sum_{\substack{s\in\CMcal{I}_{n}\\ \Rmin(s)=A\\ s_{n-1}<s_n=j}}u^{\asc(s)}q^{\max(s)}z^{\zero(s)}
&=\sum_{\substack{s\in\CMcal{I}_{n}\\ \Rmin(s)=A\\ 
		s_n=j\in \mathsf{Dist}(s)}}u^{\mathsf{dist}(s)}q^{\max(s)}z^{\zero(s)}.
\end{align*}
Furthermore, it is easily seen that the above identity holds for $j=n-1$. It remains to show
\begin{align}\label{E:32}
\sum_{\substack{s\in\CMcal{I}_{n}\\ \Rmin(s)=A\\ s_{n-1}\ge s_n=j}}u^{\asc(s)}q^{\max(s)}z^{\zero(s)}
&=\sum_{\substack{s\in\CMcal{I}_{n}\\ \Rmin(s)=A\\ 
		s_n=j\not\in \mathsf{Dist}(s)}}u^{\mathsf{dist}(s)}q^{\max(s)}z^{\zero(s)}.
\end{align}
Since $j=0$ is a trivial case for this identity, we only prove it when $1\le j\le n-2$. For a given $j$, let $A_j'$ be any subset of $[n-1]$ satisfying $\{a\in A'_j:a\le j\}\cup\{j\}=A$ and $\max(A'_j)\ge j$, then by (\ref{E:1}),
\begin{align}
\sum_{\substack{s\in\CMcal{I}_{n}\\ \Rmin(s)=A\\ s_{n-1}\ge s_n=j}}u^{\asc(s)}q^{\max(s)}z^{\zero(s)}
\nonumber&=\sum_{A_j'\subseteq [n-1]}\sum_{\substack{s\in\CMcal{I}_{n-1}\\\Rmin(s)=A'_j}}u^{\asc(s)}q^{\max(s)}z^{\zero(s)}\\
\label{E:33}&=\sum_{A'_j\subseteq [n-1]}\sum_{\substack{s\in\CMcal{I}_{n-1}\\\Rmin(s)=A'_j}}u^{\dist(s)}q^{\max(s)}z^{\zero(s)}.
\end{align}
We will establish a bijection that maps $s\in\I_{n-1}$ with $\Rmin(s)=A'_j$ to a sequence $s'\in\I_n$ such that $\Rmin(s')=A$ and $s'_n=j\not\in\mathsf{Dist}(s')$, by which the RHS of (\ref{E:33}) equals the RHS of (\ref{E:32}). This will lead to (\ref{E:32}), thus completing the proof.

For a sequence $s=(s_1,s_2,\ldots,s_{n-1})\in\I_{n-1}$ with $\Rmin(s)=A'_j$, the corresponding $s'$ is defined case by case:
\begin{enumerate}
\item if $s_{n-1}=j$, then choose $s'=s_1\ldots s_{n-2}s_{n-1}j$, i.e., add a $j$ at the end of $s$. Since $\Rmin(s)=A_j'=A$, $\Rmin(s')=A$ and $s'_n=j\not\in\mathsf{Dist}(s')$;
\item if $s_{n-1}\ge j+1$, let $m=\ell-1-s_{\ell}$ where $\ell=\min\{r:s_{r}=s_{n-1}\}$ and let $k$ be the largest index for which $s_i=s_{n-1}$ for all $n-k\le i\le n-1$. Then, 
\begin{itemize}
\item move all entries $s_{j+m+1},s_{j+m+2},\ldots,s_{n-1}$ $k$ positions to the right and increase the entry $y$ by $k$ whenever $y\ge j+m$ (in order to preserve the number of maximal entries); 
\item put $j$ to the $(j+m+1)$-th position and add $(k-1)$ $j's$ after it; 
\item remove $(k-1)$ identical entries $s_{n-1}+k$ from the end and replace all other entries $s_{n-1}+k$ by $j$ (by which all entries $s_{n-1}+k$ are gone and the last entry must be $j$);
\end{itemize}
Let $s'$ be the resulting sequence, then by construction $\Rmin(s')=A$ and the last entry $j\not\in\mathsf{Dist}(s')$. 
\end{enumerate}
This map is in fact invertible. For any $s'\in\I_n$ with $\Rmin(s')=A$, if the last two elements equal $j$, then $s'$ comes from case $(1)$. Otherwise, if the leftmost $j$ is located at the $(j+m+1)$-th position and followed by $(k-1)$ identical $j$'s with $m\ge 0$ and $k\ge1$, then we reverse the steps in (2) and recover the sequence $s$. It follows that $s\mapsto s'$ is a bijection and consequently (\ref{E:32}) is true. This complete the induction and thereby the proof.
\end{proof}

We are going to prove Theorem \ref{T:asczeromax}.

{\em Proof of Theorem \ref{T:asczeromax}}. An immediate consequence of Theorem \ref{T:bij} is an equidistribution of $(\asc,\zero,\max,\rmin)$ and $(\dist,\zero,\max,\rmin)$ on $\I_n$. That is equivalent to say that the quadruples $(\des,\lmax,\lmin,\rmax)$ and $(\ides,\lmax,\lmin,\rmax)$ have the same distribution on $\S_n$.

It remains to discuss the generating function. Let $G(t;u,q,w,z)$ be the generating function of inversion sequences counted by the length (variable $t$), the number of ascents ($\asc$, variable $u$), the number of zeros ($\zero$, variable $z$), the number of maximal entries ($\max$, variable $q$) and the last entry (variable $w$).
We set
\begin{align*}
G(t;q,u,w,z):=\sum_{n=1}^{\infty}t^n\sum_{s\in\I_n}u^{\asc(s)}q^{\max(s)}w^{\mathsf{last}(s)}z^{\zero(s)}.
\end{align*}
For any sequence $s\in\I_n$ with last entry $s_n=i$, we add a new element $j$ after the last entry $i$. We translate it into generating functions. if $i<j\le n$, then we substitute
\begin{align*}
w^i\rightarrow w^{i+1}+w^{i+2}+\cdots+qw^n=\frac{w(w^i-w^n)}{1-w}+(q-1)w^n
\end{align*}
in the generating function $utG(t;q,u,w,z)$. If $0\le j\le i$, then we substitute
\begin{align*}
w^i\rightarrow z+w+\cdots+w^{i}=\frac{1-w^{i+1}}{1-w}+z-1
\end{align*}
in the generating function $tG(t;q,u,w,z)$. Consequently we are led to
\begin{align*}
&\quad\frac{utw}{1-w}G(t;q,u,w,z)+ut\left(q-\frac{1}{1-w}\right)G(tw;q,u,1,z)\\
&+\left(z+\frac{w}{1-w}\right)tG(t;q,u,1,z)
-\frac{wt}{1-w}G(t;q,u,w,z)=G(t;q,u,w,z)-qtz,
\end{align*}
which can be simplified into
\begin{align}\label{E:ker1}
\nonumber\left(1-\frac{wt(u-1)}{1-w}\right)G(t;q,u,w,z)&=qtz+ut\left(q-\frac{1}{1-w}\right)G(tw;q,u,1,z)\\
&\quad+t\left(z+\frac{w}{1-w}\right)G(t;q,u,1,z).
\end{align}
We use kernel method to solve this equation. Choose
\begin{align*}
w=(1+ut-t)^{-1}
\end{align*}
so that the left hand side of (\ref{E:ker1}) becomes zero and the right hand side of (\ref{E:ker1}) becomes
\begin{align}\label{E:ker2}
G(t;q,u,1,z)=\frac{qtz(1-u)}{1+tz(u-1)}+\frac{u(1-(q-1)(u-1)t)}{1+zt(u-1)}G(tw;q,u,1,z).
\end{align}
Let $\sigma_m=t(1-mt+mut)^{-1}$. Then clearly $\sigma_0=t$ and $\sigma_1=tw$. Iterating (\ref{E:ker2}), we come to
\begin{align*}
\nonumber G(t;q,u,1,z)&=\sum_{m=0}^{n}\frac{qz(1-u)\sigma_m}{1-z(1-u)\sigma_m}
\prod_{i=0}^{m-1}
\left(\frac{u(1-(q-1)(u-1)\sigma_i)}{1-z(1-u)\sigma_i}\right)\\
&\quad+\prod_{j=0}^{n}\left(\frac{u(1-(q-1)(u-1)\sigma_j)}{1-z(1-u)\sigma_j}\right)G(\sigma_{n+1};q,u,1,z)
\end{align*}
Let $n\rightarrow \infty$, we conclude that
\begin{align*}
\nonumber G(t;q,u,1,z)&=\sum_{m=0}^{\infty}\frac{qz(1-u)\sigma_m u^m}{1-z(1-u)\sigma_m}\prod_{i=0}^{m-1}
\left(\frac{1-(q-1)(u-1)\sigma_i}{1-z(1-u)\sigma_i}\right),
\end{align*}
as a formal series in $u$ with rational coefficients in $t$. Replace $t$ by $t(1-u)^{-1}$, we obtain the generating function in Theorem \ref{T:asczeromax}. \qed


\section{Final remarks}\label{S:final}
We investigate three different decompositions of inversion sequences and derive bi-symmetric generating functions counted by the Euler-Stirling statistics. 
Most of the symmetric properties are discovered by combinatorial arguments. Two natural questions in this aspect are:
\begin{enumerate}
\item Can we prove $\CMcal{G}(t;x,u,v,q,1)=\CMcal{G}(t;x,u,q,v,1)=\CMcal{G}(t;u,x,q,v,1)$ by manipulations on the generating function (see the RHS of (\ref{E:thm1}))?
\item Recall that $\CMcal{F}(t;u,x,v,z)$ is defined in Remark \ref{Remark:2}, then it is suggested from numerical data by Maple that $\CMcal{F}(t;u,x,v,z)$ is bi-symmetric, i.e.,  $\CMcal{F}(t;u,x,v,z)=\CMcal{F}(t;x,u,z,v)$.
Can we prove/disprove it by the generating function formula or a bijection?
\item It would be interesting to find out a generating function with all five statistics included. Can one use the theory of $P$-partition to reach this goal?
\end{enumerate}

A conjecture on a quintuple equidistribution of Euler-Stirling statistics on permutations remains unsolved. Let us mention it below for interested readers.
\begin{conjecture}\cite{js}\label{OP:2}
	Let $\rep(s)=n-1-\dist(s)$ be the number of repeated entries of $s$.
	Then, there is a bijection $\Omega: \CMcal{I}_n\rightarrow \CMcal{I}_n$
	such that for all $s\in \CMcal{I}_n$, 
	\begin{equation*}
	(\asc,\rep,\zero,\max,\rmin)s=(\asc,\rep,\zero,\rmin,\max)\Omega(s).
	\end{equation*}
	Consequently for all $\pi\in \mathfrak{S}_n$,
	\begin{equation*}
	(\des,\iasc,\lmax,\lmin,\rmax)\pi=(\des,\iasc,\lmax,\rmax,\lmin)(\Theta^{-1}\circ \Omega\circ \Theta)(\pi),
	\end{equation*}
	where $\Theta:\mathfrak{S}_n\rightarrow \CMcal{I}_n$ is a bijection with the property (\ref{E:bv}).
\end{conjecture}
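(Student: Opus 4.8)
The plan is to argue bijectively after translating to inversion sequences. By the Baril--Vajnovszki correspondence (\ref{E:bv}), Conjecture~\ref{OP:2} is equivalent to constructing a bijection $\Omega:\I_n\to\I_n$ that fixes the triple $(\asc,\dist,\zero)$ (equivalently $(\asc,\rep,\zero)$) and interchanges $\max$ and $\rmin$. The first thing I would record is how close the already-proved results come. The symmetry $\CMcal{G}(t;x,u,v,q,1)=\CMcal{G}(t;x,u,q,v,1)$ of Theorem~\ref{T:1}, realized on permutations by the complement--reverse involution $\pi\mapsto(\pi^c)^r$ and transported through $\Theta$, yields a bijection $\Psi$ on $\I_n$ with $(\asc,\dist,\max,\rmin)s=(\asc,\dist,\rmin,\max)\Psi(s)$. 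Thus $\Psi$ already fixes $\asc,\dist$ and swaps $\max\leftrightarrow\rmin$; the \emph{sole} obstruction to the conjecture is that $\Psi$ does not preserve $\zero$. Indeed, the permutation dictionary shows that complement--reverse turns the number of left-to-right maxima into a right-to-left-minimum count, so $\zero\circ\Psi\ne\zero$ in general, and $\Psi$ must be repaired or replaced by a genuinely different map.

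My main approach would therefore be an induction on $n$ built on the append-at-the-end decomposition that drives the proof of Theorem~\ref{T:bij}, but now carrying $\max$ as an extra variable. Fixing the last entry $j=s_n$ and the intended last entry $j'=(\Omega s)_n$, I would try to set up, for each pair $(j,j')$, a length-preserving matching between $\{s\in\I_n:s_n=j\}$ and $\{s'\in\I_n:s'_n=j'\}$ that trades exactly one unit of $\max$ for one unit of $\rmin$ while leaving $\asc$, $\dist$ and $\zero$ untouched; deleting the last entry reduces both sides to $\I_{n-1}$, where the inductive hypothesis applies. Since the set-valued refinement $\Rmin$ was precisely what made the bookkeeping close in Theorem~\ref{T:bij}, I would run the induction at the refined level $(\asc,\dist,\zero,\max,\Rmin)$ and look for a dual set-valued avatar of $\max$---for instance the set $\{i:s_i=i-1\}$ of positions of maximal entries, whose cardinality is $\max(s)$---so that one can barter $\Rmin$-data against this positional data in a single reversible move.

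The hard part, and the reason the conjecture is still open, is the mismatch between the \emph{positional} statistic $\max$ (the event $s_i=i-1$ refers to the absolute index $i$) and the \emph{order-theoretic} statistic $\rmin$ (a right-to-left comparison of values), all under the constraint that the \emph{global} statistic $\dist$ stay fixed---and, as the paper stresses, $\dist$ cannot be read off locally. Appending or deleting an entry shifts the threshold $i-1$ for every later position, so any move that manufactures a maximal entry perturbs the positional data downstream, and this perturbation must be cancelled against the value-based changes recorded by $\Rmin$ without disturbing $\asc$ or $\zero$. A generating-function attack is blocked for the same structural reason that $\CMcal{G}(t;x,u,v,q,z)$ is, in the author's words, out of reach: none of the three recursive decompositions simultaneously linearizes $\dist$, $\zero$, $\max$ and $\rmin$. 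I would expect a successful argument to hinge on isolating the right intermediate set-valued statistic that makes the $\max\leftrightarrow\rmin$ trade local, and I would validate any candidate $\Omega$ on small $n$ against the Maple computations mentioned in Section~\ref{S:final} before attempting the general induction.
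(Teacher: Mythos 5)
Your submission is not a proof: it is a survey of obstructions and a sketch of a strategy, and you say as much yourself (``the reason the conjecture is still open\dots''). The concrete gap is that the bijection $\Omega$ is never constructed. The proposed induction on $n$ via the append-at-the-end decomposition requires, for each pair $(j,j')$ of last entries, an explicit reversible matching between $\{s\in\CMcal{I}_n: s_n=j\}$ and $\{s'\in\CMcal{I}_n: s'_n=j'\}$ that preserves $(\asc,\dist,\zero)$ and trades $\max$ against $\rmin$; no such matching is defined, and no candidate for the ``dual set-valued avatar of $\max$'' is proposed, so the inductive step has no content. The preliminary observations you do make are accurate --- Theorem \ref{T:1} does supply, via $\pi\mapsto(\pi^c)^r$ transported through $\Theta$, a bijection fixing $(\asc,\dist)$ and swapping $\max\leftrightarrow\rmin$, and the genuine difficulty is indeed that this map destroys $\zero$ while $\dist$ cannot be tracked locally --- but identifying the obstruction is not the same as overcoming it.

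For calibration: the paper itself offers no proof of this statement either. It is quoted from \cite{js} and explicitly listed in Section \ref{S:final} as an unsolved conjecture; the closest result actually established is Theorem \ref{T:bij}, which swaps $\asc$ and $\dist$ while fixing $(\zero,\max,\Rmin)$ --- a different exchange from the $\max\leftrightarrow\rmin$ swap required here. So there is no ``paper proof'' to measure your approach against, and your text should be read as a (reasonable, correctly informed) research plan rather than a solution.
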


\section{Acknowledgement}
The author thanks Mireille Bousquet-M\'{e}lou, Zhicong Lin, Michael J. Schlosser and Huijie Shen for very helpful discussions. The author is very grateful to Michael J. Schlosser for providing an analytic proof of $\CMcal{G}(t;x,u,v,1,1)=\CMcal{G}(t;u,x,v,1,1)$ in Section \ref{S:4}.



\end{document}